\newtheorem{theorem}{Theorem}
\newtheorem{lemma}{Lemma}
\newcommand{\openr}{\hbox{${\rm I\kern-.2em R}$}}
\newcommand{\openn}{\hbox{${\rm I\kern-.2em N}$}}
\begin{document}

\title{Estimation of population size based on capture recapture designs and evaluation of the estimation reliability}

\author[1]{Yue You}
\author[1]{Mark van der Laan}
\author[2]{Philip Collender}
\author[2]{Qu Cheng}
\author[1]{Alan Hubbard}
\author[1]{Nicholas P Jewell}
\author[1]{Zhiyue Tom Hu}
\author[3]{Robin Mejia}
\author[2]{Justin Remais}
\affil[1]{Division of Biostatistics and Epidemiology, University of California, Berkeley}
\affil[2]{Division of Environmental Health Sciences, University of California, Berkeley}
\affil[3]{Department of Statistics, Carnegie Mellon University}

\date{\today}

\maketitle

\begin{abstract}
We propose a modern method to estimate population size based on capture-recapture designs of K samples. The observed data is formulated as a sample of n i.i.d. K-dimensional vectors of binary indicators, where the k-th component of each vector indicates the subject being caught by the k-th sample, such that only subjects with nonzero capture vectors are observed. The target quantity is the unconditional probability of the vector being nonzero across both observed and unobserved subjects. We cover models assuming a single general constraint on the K-dimensional distribution such that the target quantity is identified  and the statistical model is unrestricted. We present solutions for general linear constraints, as well as constraints commonly assumed to identify capture-recapture models, including no K-way interaction in linear and log-linear models, independence or conditional independence. We demonstrate that the choice of constraint(identification assumption) has a dramatic impact on the value of the estimand, showing that it is crucial that the constraint is known to hold by design. For the commonly assumed constraint of no K-way interaction in a log-linear model, the statistical target parameter is only defined when each of the  $2^K - 1$ observable capture patterns is present, and therefore suffers from the curse of dimensionality. We propose a targeted MLE based on undersmoothed lasso model to smooth across the cells while targeting the fit towards the single valued target parameter of interest. For each identification assumption, we provide simulated inference and confidence intervals to assess the performance on the estimator under correct and incorrect identifying assumptions. We apply the proposed method, alongside existing estimators, to estimate prevalence of a parasitic infection using multi-source surveillance data from a region in southwestern China, under the four identification assumptions. 
\end{abstract}

{\bf Keywords}: Asymptotic linear estimator, capture-recapture, targeted maximum likelihood estimation(TMLE), undersmoothed lasso. 

\section{Introduction}
% literature review
Epidemiologists use surveillance networks to monitor trends in disease frequency. When multiple surveillance components or surveys gather data on the same underlying population (such as those diagnosed with a particular disease over a particular time period), a variety of methods (capture-recapture designs \cite{chao2001applications}, distance sampling \cite{buckland2001introduction}, multiple observers \cite{alldredge2006estimating}, etc.) may be used to better estimate the disease occurrence in the population. Capture-recapture models are widely used for estimating the size of partially observed populations, usually assuming that individuals do not enter or leave the population between sample collections  \cite{chao2001overview, kurtz2013local}. These models have been widely applied to epidemiological data \cite{wittes1974applications}.

Due to the unobservability of outcomes for individuals not captured by any survey, additional identifying assumptions have to be made in capture-recapture problems. In two-sample scenarios, a common identification assumption is independence between the two samples (i.e. that capture in one survey does not change the probability of capture in the other survey). The estimator of population size based on this assumption is known as the Lincoln-Petersen estimator \cite{seber1982estimation}. However, the independence assumption is often violated in empirical studies. In problems involving three or more surveys, it is common to assume that the highest order interaction term in log-linear or logistic model equals zero (i.e., that correlations between survey captures can be described by lower-order interactions alone), an assumption that is very difficult to interpret or empirically verify \cite{chao2001overview}. An additional challenge to capture-recapture estimators is the curse of dimensionality in the finite sample case, whereby the absence of one or more capture patterns from the observed sample leads to undefined interaction terms. Traditionally, a common approach to selecting among alternative capture-recapture models is to perform model selection based on Akaike's Information Criterion (AIC) \cite{bozdogan1987model}, Bayesian Information Criterion (BIC) \cite{schwarz1978estimating}, or Draper's version of the Bayesian Information Criterion \cite{draper1995assessment}. However, this approach is known to have limited reliability in the presence of violations of its identifying assumptions \cite{chao2001overview, hook1997validity, braeye2016capture}. Das and Kennedy made contributions on a doubly robust method under a specific identification assumption that two lists are conditionally independent given measured covariate information \cite{das2021doubly}. 

In this paper, we propose a generalizable framework for estimating population size with as few model assumptions as possible. This framework can be adapted to posit various identification assumptions, including independence between any pair of surveys or absence of highest-order interactions, and can be applied to linear and nonlinear constraints. In high dimensional settings with finite samples, we use machine learning methods to smooth over unobserved capture patterns and apply targeted maximum likelihood estimation (TMLE) \cite{van2011targeted} updates to reduce estimation bias. Previous work has shown the vulnerability of the existing estimators with violations of identification assumptions \cite{grimm2014reliability, rees2011testing}, and we further show the significant impact of the misspecified identification assumption on the estimation results for all existing and proposed estimators. 

\subsection{Paper outline}
In this paper, we start with the statistical formulation of the estimation problem. In section \ref{formulation}, we define the framework of our estimators under linear and non-linear constraints. Specifically, we develop the estimators under each of the following identification assumptions: no K-way interaction in a linear model, independence among samples, conditional independence among samples, and no K-way interaction in a log-linear model. In section \ref{efficient}, we derive the efficient estimator of the target parameter, and the statistical inference. In section \ref{TMLE} we provide the targeted learning updates for the smoothed estimators under the no K-way interaction log-linear model. In section \ref{simulation}, we illustrate the performance of existing and proposed estimators in various situations, including high-dimensional finite sample settings. In section \ref{sensitivity}, we show the performance of the estimators under violations of various identification assumptions. In section \ref{data}, we apply the estimators to surveillance data  on a parasitic infectious disease in a region in southwestern China \cite{spear2004factors}. In section \ref{conclusions}, we summarise the characteristics of the proposed estimators and state the main findings of the paper.

\clearpage
\section{Statistical formulation of estimation problem}\label{formulation}
\subsection{Defining the data and its probability distribution}
We define the capture-recapture experiments in the following manner. One takes a first random sample from the population of size $n_1$, records identifying characteristics of the individuals captured and an indicator of their capture, then repeats the process a total of $K$ times, resulting in $K$ samples of size $n_1,\ldots,n_K$. 

Each individual $i$ in the population, whether captured or not, defines a capture history as a vector $B^*_i =(B^*_i(1), \ldots, B^*_i(K))$, where $B^*_i(k)$ denotes the indicator that this individual $i$ is captured by sample $k$. We assume the capture history vectors $B^*_i$ of all $i = 1, \dots, N$ individuals independently and identically follow a common probability distribution, denoted by $P_{B^*}$. Thus $P_{B^*}$ is defined on $2^K$ possible vectors of dimension $K$.

Note that, for the individuals contained in our observed sample of size $n = \sum_{k=1}^K n_k$, we actually observe the realization of $B^*$.
For any individual $i$ that is never captured, we know that $B^*_i=0$, where $0=(0,\ldots,0)$ is the $K$ dimensional vector in which each component equals $0$. However, for these $N-n$ individuals we do not know the identity of these individuals and we also do not know how many there are (i.e., we do not know $N-n$). 

Therefore, we can conclude that our observed data set $B_1,\ldots,B_n$ are $n$ independent and identically distributed draws from the conditional distribution of $B^*$, given $B^*\not =0$. Let's denote this true probability distribution with $P_0$ and its corresponding random variable with $B$.

So we can conclude that under our assumptions we have that $B_1,\ldots,B_n\sim_{iid} P_0$, where $P_0(b)=P_{B^*,0}(b\mid B\not =0)$ for all $b\in \{0,1\}^K$. Since the probability distribution $P$ of $B$ is implied by the distribution $P^*$ of $B^*$ we also use the notation 
$P=P_{P^*}$ to stress this parameterization.

\subsection{Full-data model, target quantity}
Let ${\cal M}^F$ be a model for the underlying distribution $P_{B^*,0}$. The full-data target parameter $\Psi^F:{\cal M}^F\rightarrow\openr$ 
is defined as \[
\Psi^F(P_{B^*})=P_{B^*}(B\not =0).\] In other words, we want to know the proportion of $N$ that on average will not be caught by the combined sample. An estimator $\psi^F_n$ of this $\psi^F_0=\Psi^F(P_{B^*,0})$ immediately translates into an estimator of  the desired size $N$ of the population of interest: $N_n=\frac{n}{\psi^F_n}$.
We will focus on full-data models defined by a single constraint, where we distinguish between linear constraints defined by a function $f$ and a non-linear constraint defined by a function $\Phi$. 
Specifically,  for a given function $f$, we define the full-data model
\[
{\cal M}^F_f=\left\{P_{B^*}: \sum_{b}f(b)P_{B^*}(b)=0, 0<P^*(0)<1\right\}.\]
We will require that $f$ satisfies the following condition:
\begin{equation}\label{keyf}
f(b=0)\not = 0.
\end{equation}
%For simplicity, we assume that $f$ is standardized such that $f(0)=1$, and thus that $E_{P^*}f(B^*)=0$ is equivalent with\[P^*(0)=-\sum_{b\not =0} f(b)P^*(b).\]

In other words, ${\cal M}^F_f$ contains all probability distributions of $B^*$ for which $E_{P_{B^*}}f(B^*)=0$.
%For example, if $f_I(b)=I(b=1)$, then the constraint $E_0f_I(B^*)=0$ equals $P_0(B^*=1)=0$:  i.e., the probability on never being caught across the combined $K$ samples equals zero.
An example of interest is:
\[
f_I(b)=(-1)^{K+\sum_{k=1}^K b_k}.\] In this case, $E_0f_{I}(B^*)=0$ is equivalent with
\begin{eqnarray}
\sum_b (-1)^{K+\sum_{k=1}^K b_k} P_{B^*,0}(b)=0.\label{linear_constraint_equation}
\end{eqnarray}
We note the left-hand side represents a $K$-th way interaction term $\alpha_{1}$ in the saturated model
\[
P_{B^*}(b)=\alpha_0+\sum_{b'\not =0} \alpha_{b'}\prod_{j:b_j'=1}b_j.\]
For example, if $K=2$, then the latter model states:
\[
P_{B^*}(b_1,b_2)=\alpha_{00}+\alpha_{10}b_1+\alpha_{01}b_2+\alpha_{11}b_1b_2,\]
and the constraint $Ef_{I}(B^*)=0$ states that $\alpha_{11}=0$.

One might also use a log-link in this saturated model:
\[
\log P_{B^*}(b)=a_0+\sum_{b'\not =0} a_{b'}\prod_{j:b_j'=1}b_j.\]
In this case, $a_{1}$ is the $K$-way interaction term in this log-linear model, and we now have
\[
a_{1}\equiv \sum_b (-1)^{K+\sum_{k=1}^K b_k} \log P_{B^*,0}(b)=0.\]
So, assuming $a_1=0$ corresponds with assuming
\[
0=\sum_b (-1)^{K+\sum_{k=1}^K b_k} \log P_{B^*,0}(b).\]
This is an example of a  non-linear constraint $\Phi_I(P^*)=0$, where
\begin{eqnarray}
\Phi_I(P^*)\equiv \sum_b (-1)^{1+\sum_{k=1}^K b_k} \log P_{B^*}(b).
\end{eqnarray}\label{kway_constraint}
We will also consider general non-linear constraints defined by such a function $\Phi$, so that for that purpose one can keep this example $\Phi_I$ in mind. As we will see the choice of this constraint has quite dramatic implications on the resulting statistical target parameter/estimand and thereby  on the resulting estimator.
As one can already tell from the definition of $\Phi_I$, $\Phi_I$ is not even defined if there are some $b$ for which $P_{B^*}(b)=0$, so that also an NPMLE of $\Psi^F_{\Phi_I}(P^*_0)$ will be ill defined in the case that the empirical distribution $P_n(B=b)=0$ for some $b\not =0$. 
As we will see the parameter $\Psi^F_{f_I}(P^*_0)$ is very well estimated by the MLE, even when $K$ is very large relative to $n$, but for $\Psi^F_{\Phi_I}(P^*_0)$ we would  need a so called TMLE, incorporating machine learning \cite{van2011targeted}.

Since it is hard to believe that the $\Phi_I$ constraint is more realistic than the $f$-constraint in real applications, it appears that, without a good reason to prefer $\Phi_I$, the $f$-constraint is far superior. However, it also raises alarm bells that the choice of constraint, if wrong, can result in dramatically different statistical output, so that one should really try to make sure that the constraint that is chosen is known to hold by design.

Another example of a non-linear constraint is the independence between two samples, i.e., that \[
P^*(B^*(1:2)=(0,0))=P^*(B^*(1)=0)P^*(B^*(2)=0),\] i.e., that the binary indicators $B^*(1)$ and $B^*(2)$ are independent, but the remaining components can depend on each other and depend on $B^*(1),B^*(2)$. 
This corresponds with assuming $\Phi_{II}(P^*)=0$, where
\begin{equation}
\Phi_{II}(P^*)\equiv \sum_b I(b(1:2)=(0,0))P^*(b)-\sum_{b_1,b_2}I(b_1(1)=b_2(2)=0)P^*(b_1)P^*(b_2).\label{PhiII}
\end{equation}

A third non-linear constraint example is conditional independence between two samples, given the others. Suppose we have $K$ samples in total, and the distribution of the $j^{th}$ sample $B_j$ is independent of the $m^{th}$ sample $B_m$ given all other samples (there's no time ordering of $j, m$). Then we can derive the target parameter and efficient influence curve for this conditional independence constraint. The conditional independence constraint $\Phi_{CI} = 0$ is defined as\\
\begin{eqnarray*}
&\Phi_{CI,(j,m)} =
P^*(B_j =1 \vert B_1 = b_1, \cdots, B_m = 0, \cdots, B_K = b_k) \\
&- P^*(B_j =1 \vert B_1 = b_1, \cdots, B_m = 1, \cdots, B_K = b_k), b_t = 0,1, t = 1,\cdots, K.
\end{eqnarray*}
Because we must have the term $P^*(0,0,\cdots, 0)$ in the constraint to successfully identify the parameter of interest, the constraint $\Phi_{CI} = 0$ is sufficient. Thus the equation above can be presented as \\
\begin{eqnarray}
&\Phi_{CI,(j,m)} =
P^*(B_j =1 \vert B_1 = 0, \cdots, B_m = 0, \cdots, B_K = 0) \nonumber \\
&- P^*(B_j =1 \vert B_1 = 0, \cdots, B_m = 1, \cdots, B_K = 0).\label{cond_indep_constraint}
\end{eqnarray}
This is because for all the combinations of $\{B_1 = b_1, \cdots, B_{m - 1} = b_{m - 1}, B_{m + 1} = b_{m + 1}, \cdots, B_K = b_K\}, \forall b_t \in \{0, 1\}, t = 1,\cdots, K $, only the situation of $\{B_1 = 0, \cdots, B_{m - 1} = 0, B_{m + 1} = 0, \cdots, B_K\ = 0\}$ can generate the required term $P^*(0,0,\cdots, 0)$ in the constraint. \\

%use this example in later general identifiability result and other results, e.g efficient influence curve etc. Let's do it completely as well and develop closed form TMLE.

\subsection{Identifiability from probability distribution of data}\label{indep_define}\label{linear_estimand}
Given such a full-data model with one constraint defined by $f$ or $\Phi$, one will need to establish that for all $P^*\in {\cal M}^F_f$, we have (say we use $f$)
\[
\Psi^F(P^*)=\Psi_f(P).\]
for a known $\Psi_f:{\cal M}\rightarrow (0,1)$ and $P=P_{P^*}$, where the statistical model for $P_0$ is defined as
\[
{\cal M}_f=\{P_{P_{B^*}}: P_{B^*}\in {\cal M}^F_f\}.\]

Let's first study this identifiability problem in the special case of our full data models defined by the linear constraint $P^*_0f=0$ with $f(0)\not =0$ (equation \ref{linear_constraint_equation}).
It will show that we have identifiability of the whole $P_{B^*}$ from $P=P_{P_{B^*}}$.
Firstly, we note that $P_{P^*}(b)=P^*(b)/\psi^F$ for all $b\not =0$. Thus, $P^*(b)=\psi^F P(b)$ for all $b\not =0$.
We also have $P^*(0)=1-\psi^F$, so that $\sum_b f(b)P^*(b)=0$ yields:
\begin{eqnarray*}
0&=&\sum_b f(b)P^*(b)=\sum_{b\not =0} f(b) \psi^F P(b)+f(0)P^*(0)\\
&=&\psi^F \sum_{b\not =0} f(b)P(b)+f(0)(1-\psi^F).
\end{eqnarray*}
We can now solve for $\psi^F$:
\[
\psi^F=\frac{f(0)}{f(0)-\sum_{b\not =0} f(b)P(b)}.\]
At first sight, one might wonder if this solution is in the range $(0,1)$.
Working backwards from the right-hand side, i.e., using  $f(0)P^*(0)+\sum_{b\not =0}f(b)P^*(b)=0$ and $P^*(b)=P(b)/\psi^F$,  it indeed follows that the denominator equals $f(0)+f(0)(1-\psi^F)/\psi^F$, so that the right-hand side is indeed in $(0,1)$. 
Thus, we can conclude that:
\begin{eqnarray}
\Psi^F(P^*)=\Psi_f(P)\equiv \frac{f(0)}{f(0)-Pf},
\end{eqnarray}
where we use the notation $Pf=\int f(b)dP(b)$ for the expectation operator.

Suppose now that our the one-dimensional constraint  in the full-data model is defined in general by 
$\Phi(P^*)=0$ for some function $\Phi:{\cal M}^F_{\Phi}\rightarrow \openr$. The full data model is now defined by 
${\cal M}^F_{\Phi}=\{P^*:\Phi(P^*)=0, 0<P^*(0)<1\}$, and the corresponding observed data model can be denoted with ${\cal M}_{\Phi}$. To establish the identifiability, we still use $P^*(b)=\psi^F P(b)$ for all $b\not =0$. We also still have
$P^*(0)=1-\psi^F$. Define $P^*_{P,\psi}$ as $P^*_{P,\psi}(0)=1-\psi$ and $P^*_{P,\psi}(b)=\psi P(b)$ for $b\not =0$.
The constraint $\Phi(P^*)=0$ now yields the equation
\[
\Phi(P^*_{P,\psi^F})=0\mbox{ in $\psi^F$}\]
for a given $P=P_{P^*}$.

Consider now our particular example $\Phi_I$. 
Note that
\[
\Phi_I(P^*_{P,\psi})=\sum_{b\not =0} (-1)^{K+\sum_k b_k} \log \{\psi P(b)\}+(-1)^K \log (1-\psi).\]
We need to solve this equation in $\psi$ for the given $P$.
For $K$ is odd, we obtain
\[
\Psi_I(P)=\frac{1}{1+\exp\left(\sum_{b\not =0}f_I(b)\log P(b)\right)},\]
and for $K$ even we obtain $\log (1-\psi)/\psi)=\sum_{b\not =0} \log P(b)$ and thus
\[
\Psi_I(P)=\frac{1}{1+\exp\left(-\sum_{b\not =0}f_I(b)\log P(b)\right)}.\]
So, in general, this solution can be represented as:
\begin{equation}
\Psi_I(P)=\frac{1}{1+\exp\left( (-1)^{K+1}\sum_{b\not =0}f_I(b)\log P(b)\right)}.\label{PsiI}
\end{equation}
This solution $\Psi(P)$ only exists if $P(b)>0$ for all $b\not =0$.
In particular, a plug-in estimator $\Psi(P_n)$ based on the empirical distribution function $P_n$ would not be defined
when $P_n(b)=0$ for some cells $b\in \{0,1\}^K$. 

For general $\Phi$, one  needs to assume that this one-dimensional equation $H(\psi^F,P)\equiv \Phi(P^*_{P,\psi})=0$ in $\psi$, for a given $P\in {\cal M}$, 
always has a unique solution, which then proves the desired identifiability of $\psi^F(P^*)$ from $P_{P^*}$ for any $P^*\in {\cal M}^F_{\Phi}$. This solution is now denoted with $\Psi_{\Phi}(P)$. 
So, in this case, $\Psi_{\Phi}:{\cal M}_{\Phi}\rightarrow\openr$ is defined implicitly by $H(\Psi_{\Phi}(P),P)=0$. 
If $\Phi$ is one-dimensional, one will still have that ${\cal M}_{\Phi}$ is nonparametric.

Let's now consider the $\Phi_{II}$ constraint which assumes that $B^*(1)$ is independent of $B^*(2)$.
Again, using $P^*(b)=P(b)\psi$ for $b\not =0$ and $P^*(0)=(1-\psi)$, the equation $\Phi_{II}(P^*)=0$ yields the following quadratic equation in $\psi$:
\[
a_{II}(P)\psi^2+b_{II}(P)\psi =0,\]
where
\begin{eqnarray*}
a_{II}(P)&=& -\sum_{b_1\not =0,b_2\not =0}I(b_1(1)=b_2(2)=0)P(b_1)P(b_2)\\
&&+
\sum_{b_2\not =0}I(b_2(2)=0)P(b_2)+\sum_{b_1\not =0}I(b_1(1)=0)P(b_1)-1\\
b_{II}(P)&=& \sum_{b\not =0}I(b(1:2)=(0,0))p(b)-\sum_{b_2\not =0}I(b_2(2)=0)P(b_2)\\
&&-\sum_{b_1\not =0} I(b_1(1)=0)P(b_1) +1 .
\end{eqnarray*}
Since $\psi\not =0$, this yields the equation $a_{II}(P)\psi+b_{II}(P)=0$ and thus
\[
\Psi_{II}(P)=\frac{-b_{II}(P)}{a_{II}(P)}.\]
A more helpful way this parameter can be represented is given by:
\begin{equation}\label{PsiII}
\Psi_{II}(P)=\frac{ 1-P(B(1)=0)-P(B(2)=0)+P(B(1:2)=(0,0))}{1-P(B(1)=0)-P(B(2)=0)+P(B(1)=0)P(B(2)=0)}.\end{equation}
This identifiability result relies on $\Psi_{II}(P)\in (0,1)$, i..e, that  $P(B(1)=B(2)=0)<P(B(1)=0)P(B(2)=0)$.
In particular, we need $0<P(B(1)=0)<1$ and $0<P(B(2)=0)<1$  and thereby that each of the three cells $(1,0),(0,1),(1,1)$  has positive probability under  the bivariate marginal distribution of $B(1),B(2)$ under $P$.
It follows trivially that the inequality holds for $K=2$ since in that case $P(B(1)=B(2)=0)=0$. It will have to be verified if this inequality constraint always holds  for  $K>2$ as well, or that this is an actual 
assumption in the statistical model. Since it would be an inequality constraint in the statistical model, it would not affect the tangent space and thereby the efficient influence function for $\Psi_{II}:{\cal M}\rightarrow\openr$ presented below, but the MLE would now involve maximizing the likelihood under this constraint so that the resulting MLE of $P_0$ satisfies this constraint.

Similarly, for the conditional independence assumption, the parameter $\Psi_{CI}$ can be derived as
\begin{equation}
\Psi_{CI} = \frac{P(B_m = 1, B_j = 1, 0,...,0)}{C_0(P)},\label{Psi_CI_Eq}
\end{equation}
where 
\begin{eqnarray}
C_0(P)&=& P(B_m = 1, B_j = 1,  0,...,0)\nonumber \\
&& + P(B_m = 0, B_j = 1,  0,...,0)P(B_m = 1, B_j = 0,  0,...,0).\label{C1_P}
\end{eqnarray}

Details on this derivation are presented in the appendix \ref{apd_condInf_psi}.

One could also define a model by a multivariate $\Phi:{\cal M}^F\rightarrow\openr^d$. In this case, the full data model is restricted by $d$ constraints $\Phi(P^*)=0$ and the observed data model will not be saturated anymore.
Let's consider the example in which we assume that $(B^*(1),\ldots,B^*(K))$ are $K$ independent Bernoulli random variables. 
In this case, we assume that $P^*(B^*=b)=\prod_{k=1}^K P^*(B^*(k)=b(k))$ for all possible $b\in \{0,1\}^K$.
This can also be defined as stating that for each 2 components $B^*(j_1),B^*(j_2)$, we have that these two Bernoulli's are independent. 
Let $\Phi_{,II,j_1,j_2}(P)$ be the constraint defined as in (\ref{PhiII}) but with $B(1)$ and $B(2)$ replaced by $B(j_1)$ and $B(j_2)$, respectively.
Then, we can define $\Phi_{III}(P)=(\Phi_{II,j_1,j_2}(P): (j_1,j_2)\in \{1,\ldots,K\}^2,j_1\not =j_2)$,  a vector of dimension $K(K-1)/2$.
This defines now the model ${\cal M}^F_{III}=\{P^*: \Phi_{III}(P^*)=0\}$, which assumes that all components of $B^*$ are independent.
We will also work out the MLE and efficient  influence curve of $\psi_0^F$ for this restricted statistical model.

\subsection{Statistical Model and target parameter}
We have now defined the statistical estimation problem for linear and non-linear constraints. For linear constraints defined by a function $f$, we observe $B_1,\ldots,B_n\sim P_0\in {\cal M}_f=\{P_{P^*}:P^*\in {\cal M}^F\}$, ${\cal M}^F=\{P^*: P^*f=0,0<P^*(0)<1\}$, and our statistical target parameter is given by $\Psi_f:{\cal M}\rightarrow \openr$, where
\begin{eqnarray}
\Psi_f(P)=\frac{f(0)}{f(0)-Pf}.\label{linear_estimand_equation}
\end{eqnarray}

The statistical target parameter satisfies that $\Psi^F(P^*)=P^*(B^*\not =0)=\Psi_f(P_{P^*})$ for all $P^*\in {\cal M}^F$.

Since the full-data model only includes a single constraint, it follows that ${\cal M}_f$ consists of all possible probability distributions of $B$ on $\{b:b\not =0\}$, so that it is a nonparametric/saturated model.

Similarly, we can state the statistical model and target parameter for our examples using non-linear one-dimensional constraints $\Phi$.

In general, we have a statistical model ${\cal M}=\{P_{P^*}:P^*\in {\cal M}^F\}$ for some full data model ${\cal M}^F$ for the distribution of $B^*$, and, we would be given a particular mapping $\Psi:{\cal M}\rightarrow \openr$, satisfying $\Psi^F(P^*)=\Psi(P_{P^*})$ for all $P^*\in {\cal M}^F$. 
In this case, our goal is to estimate $\Psi(P_0)$ based on knowing that $P_0\in {\cal M}$.

 \subsection{Efficient influence curve of target parameter}

An estimator of $\Psi$ is efficient if and only if it is asymptotically linear with influence curve equal to canonical gradient of pathwise derivative of $\Psi$. Therefore it is important to determine this canonical gradient. It teaches us how to construct an efficient estimator of $\Psi(P)$ in model ${\cal M}^F$. In addition, it provides us with Wald type confidence intervals based on an efficient estimator. As we will see for most of our estimation problems $\Psi$ with a nonparametric model, $P$ and $\Psi$ can be estimated with the empirical measure $P_n$ and $\Psi(P_n)$. However for constraint $\Phi_I$, an NPMLE is typically not defined due to empty cells, so that smoothing and bias reduction is needed. 
 
 Let $\Psi_{1f}(P)=\sum_{b\not =0}f(b)P(b)$ so that $\Psi_f(P)=\frac{f(0)}{f(0)-\Psi_{1f}(P)}$.
 Note that $\Psi_{1f}(P)=Pf$ is simply the expectation of $f(B)$ w.r.t  its distribution.
 $\Psi_{1f}:{\cal M}_f\rightarrow\openr$ is pathwise differentiable parameter at any $P\in {\cal M}$ with canonical gradient/efficient influence curve given by:
 \[
 D^*_{1f}(P)(B)=f(B)-\Psi_{1f}(P).\]
 By the delta-method the efficient influence curve of $\Psi_f$ at $P$ is given by:
\begin{eqnarray}
D^*_f(P)(B)=\frac{f(0)}{\{f(0)-\Psi_{1f}\}^2} D^*_{1f}(P)(B).\label{efficient_linear}
\end{eqnarray}

 In general, if $\Psi:{\cal M}\rightarrow\openr$ is the target parameter and the model ${\cal M}$ is nonparametric, then the efficient influence curve 
 $D^*(P)$ is given by $D^*(P)=d\Psi(P)(P_{n=1}-P)$, where 
 \[
d\Psi(P)(h)=\left . \frac{d}{d\epsilon}\Psi(P+\epsilon h) \right |_{\epsilon =0}\] 
is the Gateaux derivative in the direction $h$, and $P_{n=1}$ is the empirical distribution for a sample of size one $\{B\}$, putting all its mass on $B$ \cite{gateaux1919fonctions}.

 Consider now the model ${\cal M}_{\Phi}$ for a general univariate constraint function $\Phi:{\cal M}^F_{NP}\rightarrow\openr$ that maps any possible $P^*$ into a real number. 
 Recall that $\Psi_{\Phi}:{\cal M}_{\Phi}\rightarrow\openr$ is now defined implicitly by $\Phi(P^*_{P,\psi})=0$.
 For notational convenience, in this particular paragraph we suppress the dependence of $\Psi$ on $\Phi$.
  The implicit function theorem implies that the  general form of efficient influence curve is given by:
 \[
 D^*_{\Phi}(P)(B)=-\left \{ \frac{d}{d\psi}\Phi(P^*_{P,\psi})\right\}^{-1}\frac{d}{dP}\Phi(P^*_{P,\psi})(P_{n=1}-P).\]
 where the latter derivative is the directional derivative of $P\rightarrow \Phi(P^*_{P,\psi})$ in the direction $P_{n=1}-P$.
 
   Note that 
 \[
 \frac{d}{d\psi}\Phi(P^*_{P,\psi})=d\Phi(P^*_{P,\psi}) \frac{d}{d\psi}P^*_{P,\psi}.\]
 We now use that $P^*_{P,\psi}(b)=P(b)I(b\not =0)+(1-\psi)I(b=0)$.
 So we obtain:
 \[
  \frac{d}{d\psi}\Phi(P^*_{P,\psi})=d\Phi(P^*_{P,\psi}) (-1I_{0}),\]
  where $I_0(b)$ is the function in $b$ that equals 1 if $b=0$ and zero otherwise, and 
  \[
  d\Phi(P^{*0})(h)\equiv \left . \frac{d}{d\epsilon}\Phi(P^{*0}+\epsilon h)\right |_{\epsilon =0}\]
  is the directional/Gateaux  derivative of 
 $\Phi$ in the direction $h$.  We also have:
  \[ \frac{d}{dP}\Phi(P^*_{P,\psi})(P_{n=1}-P)  =d\Phi(P^*_{P,\psi})\frac{d}{dP}P^*_{P,\psi}(P_{n=1}-P).\]
  We have $\frac{d}{dP}P^*_{P,\psi}(h)=I_{0^c}h$, where $I_{0^c}(b)$ is the function in $b$ that equals zero if $b=0$ and equals 
  $1$ otherwise.
  So we obtain:
  \[
  \frac{d}{dP}\Phi(P^*_{P,\psi})(P_{n=1}-P)  =d\Phi(P^*_{P,\psi})(I_{0^c}(P_{n=1}-P)).\]
  We conclude that
  \[
 D^*_{\Phi}(P)=- \{d\Phi(P^*_{P,\psi}) (-1I_{0})  \}^{-1}d\Phi(P^*_{P,\psi})(I_{0^c}(P_{n=1}-P)).\]
      
 Let's now consider our special example $\Phi_I$. In this case, the statistical target parameter $\Psi_I:{\cal M}\rightarrow\openr$ is given by (\ref{PsiI}). It is straightforward to show that the directional derivative of $\Psi$ at $P=(P(b):b)$ in direction $h=(h(b):b)$ is given by:
 \[
 d\Psi_I(P)(h)=(-1)^K \Psi_I(P)(1-\Psi_I(P))\sum_{b\not =0} \frac{f_I(b)}{P(b)}h(b).\]
 As one would have predicted from the definition of $\Psi_I$, this directional derivative is only bounded if $P(b)>0$ for all $b\not =0$.
 The efficient influence curve is thus given by $d\Psi_I(P)(P_{n=1}-P)$:
 \begin{eqnarray}
D^*_{\Phi_I}(P)&=& (-1)^K \Psi_I(P)(1-\Psi_I(P)) \sum_{b\not =0}\frac{f_I(b)}{P(b)}\{I_{B}(b)-P(b)\} \nonumber \\
 &=& (-1)^K \Psi_I(P)(1-\Psi_I(P)) \left\{ \frac{f_I(B)}{P(B)}+f_I(0) \right\} ,\label{efficphiI}
 \end{eqnarray}

 where we use that $\sum_{b\not =0}f_I(b)=-f_I(0)$ so that indeed the expectation of $D^*_{\Phi_I}(P)$ equals zero (under $P$).
 
 The efficient influence function for $\Psi_{II}:{\cal M}\rightarrow\openr$ (\ref{PsiII}), corresponding with the constraint $\Phi_{II}(P^*)=0$,  is the influence curve of the empirical plug-in estimator $\Psi_{II}(P_n)$, and can thus be derived  from the delta-method:
\begin{eqnarray}
 D^*_{\Phi_{II}}(P)&=&C_2(P)\{I(B(1)=0)-P(B(1)=0)\} \nonumber\\
  &&+ C_3(P)\{I(B(2)=0)-P(B(2)=0)\} \nonumber\\
 && +C_4(P)\{I(B(1)=B(2)=0)-P(B(1:2)=0)\} ,\label{efficphiII}
 \end{eqnarray}
 where
 \[
 \begin{array}{l}
C_2(P)=  \frac{1-P(B(1)=0)-P(B(2)=0)-P(B(1)=B(2)=0)}{(1-P(B(1)=0)-P(B(2)=0)-P(B(1)=0)P(B(2)=0) )^2} P(B(2)=0) \\
C_3(P)=  \frac{1-P(B(1)=0)-P(B(2)=0)-P(B(1)=B(2)=0)}{(1-P(B(1)=0)-P(B(2)=0)-P(B(1)=0)P(B(2)=0) )^2} P(B(1)=0)  \\
C_4(P)=  -\frac{1}{1-P(B(1)=0)-P(B(2)=0)-P(B(1)=0)P(B(2)=0)}   .
\end{array}
\]

Similarly, we can derive the efficient influence curve for conditional independence constraint $\Phi_{CI}$ and parameter $\Psi_{CI}$ under this constraint as 
\begin{eqnarray}\label{efficphiCI}
D^*_{\Phi_{CI}}(P) &=& \frac{1}{C_5(P)}(C_6(P) - C_7(P) - C_8(P)).
\end{eqnarray}

where
\begin{eqnarray*}
C_5(P)&=& -\sum_{b_1\not =0,b_2\not =0}I(b_1(1)=b_2(2)=0)P(b_1)P(b_2).\\
C_6(P)&=& P(B_m = 0, B_j = 1, 0,...,0)P(B_m = 0, B_j = 0, 0,...,0)\\
&& [\mathbbm{I}(B_m = 1, B_j = 1, 0,...,0) - P(B_m = 1, B_j = 1, 0,...,0)].\\
C_7(P) &=& P(B_m = 1, B_j = 0, 0,...,0)P(B_m = 1, B_j = 1, 0,...,0)\\
&& [\mathbbm{I}(B_m = 0, B_j = 1, 0,...,0) - P(B_m = 0, B_j = 1, 0,...,0)].\\
C_8(P) &=& P(B_m = 0, B_j = 1, 0,...,0)P(B_m = 1, B_j = 1, 0,...,0)\\
&& [\mathbbm{I}(B_m = 1, B_j = 0, 0,...,0) - P(B_m = 1, B_j = 0, 0,...,0)].
\end{eqnarray*}
The details for deriving the influence curve $D^*_{\Phi_{CI}}(P)$ can be found in appendix, section \ref{apd_condInf_psi}.

Finally, consider a non-saturated model ${\cal M}$ implied by a multidimensional constraint function $\Phi$. 
In this case, the above $D^*_{\Phi}(P)$ is still a gradient of the pathwise derivative, where division by a vector $x$ is now defined
as $1/x=(1/x_j:j)$ component wise. However, this is now not equal to the canonical gradient.
We can now determine the tangent space of the model ${\cal M}$ and project $D^*_{\Phi}(P)$ onto the tangent space at $P$, which then yields the actual efficient influence curve. We can demonstrate this for the example defined by the multidimensional constraint $\Phi_{III}$ using the general result in appendix \ref{apd_lemma}. 
% \begin{lemma}
% Consider a model ${\cal M}\equiv \{P\in {\cal M}_1:\Phi(P)=0\}$ defined by an initial larger model ${\cal M}_1$ and multivariate  constraint function $\Phi:{\cal M}_1\rightarrow\openr^K$.
% Suppose that $\Phi:{\cal M}_1\rightarrow\openr^K$ is path-wise differentiable at $P$ with efficient influence curve $D^*_{\Phi}(P)$ for all $P\in {\cal M}_1$.
% Let $T_1(P)$ be the tangent space at $P$ for model ${\cal M}_1$, and let $\Pi_{T_1}:L^2_0(P)\rightarrow T_1(P)$ be the projection operator onto $T_1(P)$.
% The tangent space at $P$ for model ${\cal M}$ is given by:
% \[
% T(P)=\{S\in T_1(P): S\perp D^*_{\Phi}(P)\}.\]
% The projection onto $T(P)$ is given by:
% \[
% \Pi_{T}(S)=\Pi_T(S)-\Pi_{D^*_{\Phi}}(\Pi_T(S)),\]
% where $\Pi_{D^*_{\Phi}}$ is the projection operator on the $K$-dimensional subspace of $T_1(P)$ spanned by the components of $D^*_{\Phi}(P)$. The latter projection is given by the formula:
% \[
% \Pi_{D^*_{\Phi}}(S)= E(S D^*_{\Phi}(P)^{\top}) E (D^*_{\Phi}(P)D^*_{\Phi}(P)^{\top})^{-1}D^*_{\Phi}(P).\]
% \end{lemma}
\clearpage
\section{Efficient estimator of target parameter, and statistical inference}\label{efficient}
Estimation of $\Psi_f(P_0)$ based on statistical model ${\cal M}_f$ and data $B_1,\ldots,B_n\sim_{iid}P_0$ is trivial since
$\Psi_{1f}(P_0)=P_0f$ is just a mean of $f(B)$. In other words, we estimate $\Psi_{1f}(P_0)$ with the NPMLE
\[
\Psi_{1f}(P_n)=P_nf=\frac{1}{n}\sum_{i=1}^n f(B_i),\]
where $P_n$ is the empirical distribution of $B_1,\ldots,B_n$,
and, similarly, we estimate $\Psi_f(P_0)$ with its NPMLE
\[
\Psi_f(P_n)=\frac{f(0)}{f(0)-\Psi_{1f}(P_n)}=\frac{f(0)}{f(0)-P_nf}.\]
This estimator is asymptotically linear at $P_0$ with influence curve $D^*_{\Phi}(P_0)$ under no further conditions.
As a consequence, a valid asymptotic 95\% confidence interval is given by:
\[
\Psi_f(P_n)\pm q(0.975) \sigma_n/\sqrt{n},\]
where
\[
\sigma^2_n\equiv \frac{1}{n}\sum_{i=1}^n \{D^*_f(P_n)(B_i)\}^2,\]
and $q(0.975)$ is the 0.975 quantile value of standard normal distribution. If the general constraint $\Phi:{\cal M}\rightarrow\openr$ is differentiable so that $\Phi(P_n)$ is an asymptotically linear estimator of $\Phi(P_0)$ \cite{gill1989non}, then, we can estimate $\Psi_{\Phi}(P_0)$ with the NPMLE $\Psi_{\Phi}(P_n)$. Again, under no further conditions, $\Psi_{\Phi}(P_n)$ is asymptotically linear with influence curve $D^*_{\Phi}(P_0)$, and an asymptotically valid  confidence interval is obtained as above. 
The estimator $\Psi_f(P_n)$ is always well behaved, even when $n$ is relatively large relative to $K$. For a general $\Phi$, this will very much depend on the the precise dependence on $P$ of $\Phi(P)$. In general, if $\Phi(P_n)$ starts suffering from the dimensionality of the model (i.e., empty cells make the estimator erratic), then one should expect that $\Psi_{\Phi}(P_n)$ will suffer accordingly, even though it will still be asymptotically efficient. In these latter cases, we propose to use targeted maximum likelihood estimation, which targets data-adaptive machine learning fits towards optimal variance-bias trade-off for the target parameter. 
 
Let's consider the $\Phi_I$-example which assumes that the $K$-way interaction on the log-scale equals zero. In this case the target parameter $\Psi_I(P)$ is defined by (\ref{PsiI}), which shows that the NPMLE $\Psi_I(P_n)$ is not defined when $P_n(b)=0$ for some $b\in \{0,1\}^K$. So in this case, the MLE suffers immensely from the curse of dimensionality and can thus not be used when the number $K$ of samples  is such that the sample size $n$ is of the same order as $2^K$. In this context, we discuss estimation based on TMLE in the next section. 

$\Psi_{II}(P_0)$ can be estimated with the plug-in empirical estimator which is the NPMLE. So, this estimator only relies on positive probability on $(0,1),(1,0)$ and $(1,1)$
under the bivariate distribution of $B(1),B(2)$ under $P$. We also note that this efficient estimator of $\psi_{II,0}$ does only use the data on the first two samples. Thus, the best estimator based upon this particular constraint $\Phi_{II}$ ignores the data on all the other samples.
More assumptions will be needed, such as the model that assumes that all components of $B^*$ are independent, in order to create a statistical model that is able to also incorporate the data with the other patterns. 
 
{\bf Constrained models based on multivariate $\Phi$:}
For these models, if the NPMLE would behave well  for the larger model in which just one of the $\Phi$ constraints is used, then it will behave well under more constraints. If on the other hand, this NPMLE suffers from curse of dimensionality, it might be the case that the MLE behaves better due to the additional constraints, but generally speaking that is a lot to hope for (except if $\Phi$ is really high dimensional relative to $2^K$).
To construct an asymptotically efficient estimator one can thus use the MLE $\Psi_{\Phi}(\tilde{P}_n)$ where now $\tilde{P}_n$ is the MLE over the actual model ${\cal M}_{\Phi}$.
In the special case that the behavior of this MLE $\Psi_{\Phi}(\tilde{P}_n)$ suffers from the curse of dimensionality, we recommend the TMLE below instead, which will be worked out for the example $\Phi_I$. 

\section{Targeted maximum likelihood estimation when the (NP)MLE of target parameter suffers from curse of dimensionality}\label{TMLE}
Consider the statistical target parameter $\Psi_{I}:{\cal M}_{\Phi_I}\rightarrow\openr$ defined by (\ref{PsiI}), whose efficient influence curve is given by (\ref{efficphiI}). Let $P_n^0$ be an initial estimator of the true distribution $P_0$ of $B$ in the nonparametric statistical model ${\cal M}_{\Phi_I}$ that consists of all possible distributions of $B=(B(1),\ldots,B(K))$. An ideal initial estimator for $P$ would be consistent and identifiable when empty cells exist. One could use, for example, an undersmoothed lasso estimator constructed in algorithm \ref{algo1} as the initial estimator \cite{van2019efficient}. In algorithm \ref{algo1}, we establish the empirical criterion by which the level of undersmoothing may be chosen to appropriately satisfy the conditions required of an efficient plug-in estimator. In particular, we require that the minimum of the empirical mean of the selected basis functions is
smaller than a constant times $n^{-\frac{1}{2}}$, which is not parameter specific. This condition essentially enforces the selection of the $L_1$-norm in the Lasso to be large enough so that the fit includes sparsely supported basis functions \cite{van2019efficient}. When the hyperparameter $\lambda$ equals zero, the undersmoothed lasso estimator is the same as the NPMLE plug-in estimator, and when $\lambda$ is not zero, the undersmoothed lasso estimator will smooth over the predicted probabilities and avoid predicting probabilities of exact zero when empty cells exist. 

\begin{algorithm}[H]
\SetAlgoLined
The log-linear model can be expressed as:
\[
\log E_{B^*}(b)=a_0+\sum_{b\not =0} a_{b}\prod_{j:b_j=1}b_j.
\]
In this case, $b = (b_1, b_2, ..., b_K)$, and $b_j = 1$ if the subject is captured by sample $j, j = 1,2,..., K$. $E_{B^*}(b)$ is the count of observations in cell $b$. The $K$-way interaction term is $a_{1,1,...,1}$(K terms of 1), and the identification assumption is that $a_{1,1,...,1}=0$.

Fit a lasso regression $M_{lasso}$ with all but the highest way interaction term as the independent variable, $E_{B^*}(b)$ (the count) as the dependent variable, and specify model family as Poisson. The regularization term $\lambda$ is chosen such that the absolute value of the empirical mean of the efficient influence function $P_n D^*_{\Phi_I}(P_n)(B_i) \le T^* = \frac{\sigma_n}{\sqrt n}$, where $\sigma_n = \sqrt{\frac{1}{n}\sum_{i=1}^n \{D^*_{I,cv}(P_n)(B_i)\}^2}$. The probabilities $P_n(B_i), i = 1,...,n$ used in computing $\sigma_n$ are estimated by the lasso regression with regularization term chosen by cross-validation. This algorithms will naturally avoid fits with $P_n(B_i)=0$ for any $B_i$ since that would result in a log-likelihood equal to minus infinity. 

\While{$P_n D^*_{\Phi_I}(P_n)(B_i) \ge T^*$}
{
  1. decrease $\lambda$;\\
  2. predict count $\hat E_{B^*}(b)$ using $M_{lasso}$ and the new $\lambda$;\\
  3. calculate predicted probability $\hat P_B(b)$ by dividing the count $\hat E_{B^*}(b)$ by the sum of all counts;\\
  4. calculate $\hat \Psi_I(P) = \frac{1}{1 + exp((-1)^{K} \sum_{b \ne 0} f_I(b) log \hat P_B(b))}$;\\
  5. calculate $D^*_{\Phi_I}(P_n)(B_i) = \hat \Psi_I(P)(1 - \hat \Psi_I(P))[\frac{f_I(b)}{\hat P_B(b)} + f_I(0)]$;\\
  6. update $P_n D^*_{\Phi_I}(P_n)(B_i) = \frac{1}{n} \sum_{i=1}^n D^*_{\Phi_I}(P_n)(B_i)$.
}
\KwResult{Predicted probability $\hat P_B(b)$ for each cell}
\caption{Undersmoothed Lasso}
\label{algo1}
\end{algorithm}

We denote $P_n^0 \equiv \hat P_B(b)$ as our initial estimator, the TMLE $P_n^*$ will update this initial estimator $P_n^0$ in such a way that $P_n D^*_{\Phi_I}(P_n^*)=0$, allowing a rigorous analysis of the TMLE $\Psi_I(P_n^*)$ of $\Psi_I(P_0)$ as presented in algorithm \ref{algo2}.

Note that indeed this submodel \ref{submodel} $\{P_{\epsilon}:\epsilon\}\subset {\cal M}_{\Phi_I}$ through $P$ is a density for all $\epsilon$ and that its score is given by: 
\[
\left . \frac{d}{d\epsilon}\log P_{\epsilon} \right |_{\epsilon =0}=D^*_{\Phi_I}(P).\]
Recall that $\sum_{b\not = 0} f_I(b)=-f_I(0)$, so that indeed the expectation of its score equals zero:  $E_P D^*_{\Phi_I}(P)(B)=0$. This proves that if we select as loss function of $P$ the log-likelihood loss $L(P)=-\log P$, then the score $\left . \frac{d}{d\epsilon} L(P_{\epsilon}) \right |_{\epsilon =0}$ spans the efficient influence function $D^*_{\Phi_I}(P)$. This property is needed to establish the asymptotic efficiency of the TMLE, with details in appendix section \ref{apd_efficincy_TMLE}.

In accordance with general TMLE procedures \cite{van2011targeted}, the TMLE updating process in algorithm \ref{algo2} will be iterated till convergence at which point $\epsilon_n^m \approx 0$, or $P_n D^*_{\Phi_I}(P_n^m)=o_P(1/\sqrt{n})$. Let $P_n^*=\lim_m P_n^m$ be the probability distribution in the limit. The TMLE of $\Psi_I(P_0)$ is now defined as $\Psi_I(P_n^*)$.
Due to the fact that the MLE $\epsilon_n^m$ solves its score equation and $\epsilon_n^m\approx 0$, it follows that this TMLE satisfies $P_n D^*_{\Phi_I}(P_n^*)=0$ (or $o_P(1/\sqrt{n})$).

\begin{algorithm}[H]
\SetAlgoLined
\textbf{Input}: Vector of predicted probability $\hat P_B(b)$; observed population size $n$;\\
\textbf{Procedure}:\\
1. Denote $P_n^0 = \hat P_B(b)$, calculate $\Psi_n(P_n^0)$ using equation \ref{PsiI}, $D_{\Phi_I}^*(P_n^0)$ using equation \ref{efficphiI}, the empirical mean of $D_{\Phi_I}^*(P_n^0)$ as $P_nD^*_{\Phi_I}(P_n^0) = \frac{1}{n}\sum_{i=1}^n D_{\Phi_I}^*(P_n^0)(B_i)$, and the stopping point $s = \frac{\sqrt{\frac{1}{n}\sum_{i=1}^n D_{\Phi_I}^*(P_n^0)(B_i)^2}}{max(log(n), C)\sqrt n}$, where $C$ is a positive constant.

2. Let $m \in \mathbbm{Z}$ be the number of iterations, and $P_n^m$ is the updated probability in iteration $m$. Initial $m=0$. $\delta$ is a positive constant close to zero.\\

\While{$\lvert P_n D^*_{\Phi_I}(P_n^m)(B_i)\rvert > s$ and $\lvert \epsilon \rvert > \delta$}
{
2.1. calculate the bounds for $\epsilon$ such that the updated probability $\in [0,1]$.
\[ l_\epsilon = max_i[min(-\frac{1}{D^*_{\Phi_I}(P_n^m)(B_i)}, \frac{1-P_n^m(B_i)}{P_n^m(B_i)D^*_{\Phi_I}(P_n^m)(B_i)}))] \]
\[ u_\epsilon = min_i[max(-\frac{1}{D^*_{\Phi_I}(P_n^m)(B_i)}, \frac{1-P_n^m(B_i)}{P_n^m(B_i)D^*_{\Phi_I}(P_n^m)(B_i)}))] \]

2.2. Construct a least favorable parametric model  $\{P_{n,\epsilon}^m:\epsilon\}$ through $P_n^m$ defined as follows:
\begin{equation}\label{submodel}
P_{n,\epsilon}^{m} = C(P_n^m, \epsilon)(1+ \epsilon D^*_{\Phi_I}(P_n^m))P_n^m,
\end{equation}
where
\[
C(P, \epsilon) = \frac{1}{\sum_{b \ne 0} (1+ \epsilon D^*_{\Phi_I}(P(b)))P(b)}, 
\]

2.3. calculate \[ \epsilon^m_n = argmax_\epsilon \frac{1}{n} \sum_{b \ne 0} log(P_{n,\epsilon}^{m}),
\]
where $\epsilon \in [l_\epsilon, u_\epsilon]$.

2.4. $m \gets m + 1$, and update $P_n^m \gets P_{n,\epsilon^m_n}^{m}$.
}

3. Denote $P_n^* = P_n^m$ in the final iteration $m$. Calculate TMLE $\Psi_n(P_n^*)$ using equation \ref{PsiI}, and its efficient influence function $D_{\Phi_I}^*(P_n^*)$ using equation \ref{efficphiI}.

\KwResult{TMLE $\Psi_n(P_n^*)$ and efficient influence function $D_{\Phi_I}^*(P_n^*)$.}
\caption{Targeted maximum likelihood estimation(TMLE) update procedure}
\label{algo2}
\end{algorithm}

The proof of the asymptotic efficiency of TMLE is provided in appendix section \ref{apd_efficincy_TMLE}.

%  For example, we could factorize the likelihood as
% $P(b)=\prod_{k=1}^K P(B(k)=b(k)\mid \bar{B}(k-1)=\bar{b}(k-1)),$
%  and estimate the conditional distribution $P_k$ of $B(k)$, given $\bar{B}(k-1)=(B(1),\ldots,B(k-1))$, based on a $k$-specific super-learner based on the loss-function $L_k(P)=-\log P_k$ for $P_k$. 
 % The library of this $k$-specific super-learner would consist of logistic regression algorithms regressing the indicator $B(k)$ on $\bar{B}(k-1)$, where these logistic regression algorithms could include a variety of machine learning algorithms as well as MLE for parametric working models.
% These algorithms will naturally avoid fits with $P_n^0(B_i)=0$ for some $B_i$ since that would result in a log-likelihood  equal to minus infinity. 
 % In particular, when $P_n(b)=0$ for some cells $b$, then $P_n^0$ will be quite different from the empirical distribution $P_n$.

\section{Simulations}\label{simulation}
%be specific about sections. sections establish target estimands and estimators. 

In this section, we show the performance of all the estimators given that their identification assumptions (linear and non-linear constraints) hold true. The estimand $\Psi_f(P)$ and $\Psi_\Phi(P)$ refer to the probability of an individual being captured at least once under the linear constraint $f$ or non-linear constraint $\Phi$. In subsection \ref{linear_section}, the linear constraint $\Phi_f$ is defined in equation \ref{linear_constraint_equation}, the corresponding estimand (equation \ref{linear_estimand_equation}) \[\Psi_f(P) = \frac{f(0)}{f(0)-Pf},\] and its plug-in estimator \[\Psi_f(P_n) = \frac{f(0)}{f(0)-\sum_{b\not =0} f(b)P_n(b)},\] where $P_n(b)$ is the empirical probability of cell $b$.
In subsection \ref{nonlinear_section}, we provide a summary of all the non-linear constraints defined by equation $\Phi$ and their corresponding estimators. 
In subsection \ref{kwayHolds},  the non-linear constraint $\Phi_I$ is defined by the assumption in equation \ref{kway_constraint}. The estimand (equation \ref{PsiI}) \[\Psi_I(P) = \frac{1}{1+\exp\left( (-1)^{K+1}\sum_{b\not =0}f_I(b)\log P(b)\right)}.\] The plug-in estimator \[\Psi_{I}(P_{NP}) = \frac{1}{1+\exp\left( (-1)^{K+1}\sum_{b\not =0}f_I(b)\log P_n(b)\right)}.\]In addition, the undersmoothed lasso estimator $\Psi_{I}(P_{lasso})$ replaces the plugged-in $P_n(b)$ in $\Psi_{I}(P_{NP})$ with estimated probability in a lasso regression with regularization parameters chosen by the undersmoothing algorithm, the estimator $\Psi_{I}(P_{lasso\_cv})$ replaces the plugged-in $P_n(b)$ with estimated probability in a lasso regression with regularization parameters optimized by cross-validation. The estimator $\Psi_{I}(P_{tmle})$ updates the estimated probabilities of $\Psi_{I}(P_{lasso})$ using targeted learning techniques, and the estimator $\Psi_{I}(P_{tmle\_cv})$ updates the estimated probabilities of $\Psi_{I}(P_{lasso\_cv})$ using targeted learning techniques. In addition, we compare the performance of our proposed estimators to existing estimators $\Psi_{I}(P_{M_0})$ and $\Psi_{I}(P_{M_t})$. 

In subsection \ref{indep_section}, the non-linear constraint $\Phi_{II}$ is defined by the independence assumption in equation \ref{PhiII}. The estimand (equation \ref{PsiII}) \[\Psi_{II}(P) \equiv \frac{1-P(B(1)=0)-P(B(2)=0)+P(B(1:2)=(0,0))}{1-P(B(1)=0)-P(B(2)=0)+P(B(1)=0)P(B(2)=0)},\] and its plug-in estimator \[\Psi_{II}(P_n) \equiv \frac{1-P_n(B(1)=0)-P_n(B(2)=0)+P_n(B(1:2)=(0,0))}{1-P_n(B(1)=0)-P_n(B(2)=0)+P_n(B(1)=0)P_n(B(2)=0)}.\]

In subsection \ref{cond_section}, the non-linear constraint $\Phi_{CI}$ is defined by the conditional independence assumption in equation \ref{cond_indep_constraint}. The estimand (equation \ref{Psi_CI_Eq}, $C_0(P)$ is defined in equation \ref{C1_P}) \[\Psi_{CI}(P) \equiv \frac{P(B_m = 1, B_j = 1, 0,...,0)}{C_0(P)},\] and its plug-in estimator \[\Psi_{CI}(P_n)\frac{P_n(B_m = 1, B_j = 1, 0,...,0)}{C_0(P_n)}.\]

\subsection{Linear identification assumption}\label{linear_section}
The linear identification constraint is $E_{P_{B^*}} f(B^*) = 0$, for any function $f$ such that $f(b=0) \ne 0$. An example of interest is $f(b) = (-1)^{K + \sum_{k=1}^K b_k}$, where K is the total number of samples. In this case, the linear identification assumption is equation \ref{linear_constraint_equation}.
For example, if $K=3$, then the constraint (equation \ref{linear_constraint_equation}) states:
\[
P_{B^*}(b_1,b_2, b_3)=\alpha_{0}+\alpha_{1}b_1+\alpha_{3}b_2+\alpha_{4}b_1b_2 + \alpha_{5}b_1b_3 + \alpha_{6}b_2b_3 + \alpha_{7}b_1b_2b_3\]
and the constraint $E_{P_{B^*}} f(B^*) = 0$ states that $\alpha_{7}=0$. The identified estimator 
\[\Psi_f(P_n) = \frac{f(0)}{f(0) - \sum_{b \ne 0} f(b) P_n(b)},\] 
where $P_n(b)$ is the observed probability of cell $b$, and the efficient influence curve (equation \ref{efficient_linear}) is  
\[D^*_f = \frac{f(0)}{(f(0) - \sum_{b \ne 0} f(b) P(b))^2} [f(B) - \sum_{b \ne 0} f(b) P(b)].\]

We use the parameters below to generate the underlying distribution. The assumption that the highest-way interaction term $\alpha_7 = 0$ is satisfied in this setting.
\begin{table}[H]
\resizebox{0.65 \textwidth}{!}{
\begin{tabular}{|l|l|l|l|l|l|l|l|}
\hline
$\alpha_0$ & $\alpha_1$ & $\alpha_2$ & $\alpha_3$ & $\alpha_4$ & $\alpha_5$ & $\alpha_6$ & $\alpha_7$ \\ \hline
0.0725 & 0.03 & 0.01 & 0.04 & 0.01 & 0.02 & 0.02 & 0 \\ \hline
\end{tabular}}
\end{table}

The simulated true probabilities for all 7 observed cells are:
\begin{table}[H]
\resizebox{0.8 \textwidth}{!}{
\begin{tabular}{|l|l|l|l|l|l|l|}
\hline
P(0,0,1) & P(0,1,0) & P(0,1,1) & P(1,0,0) & P(1,0,1) & P(1,1,0) & P(1,1,1) \\ \hline
0.1213 & 0.0889 & 0.1429 & 0.1105 & 0.1752 & 0.1428 & 0.2183 \\ \hline
\end{tabular}}
\end{table}

We can calculate the true value of $\Psi_f(P)$ analytically as $\Psi_f(P_0) = 0.9275$, and draw $10^4$ samples to obtain the asymptotic $\Psi_{f,asym}(P_n) = 0.9316$, asymptotic $\hat \sigma^2  = 0.7492$. 

Figure \ref{linear} shows that our estimators perform well when the identification assumption holds true.
\begin{figure}[!ht]
\centering
    \includegraphics[width= 0.8\linewidth]{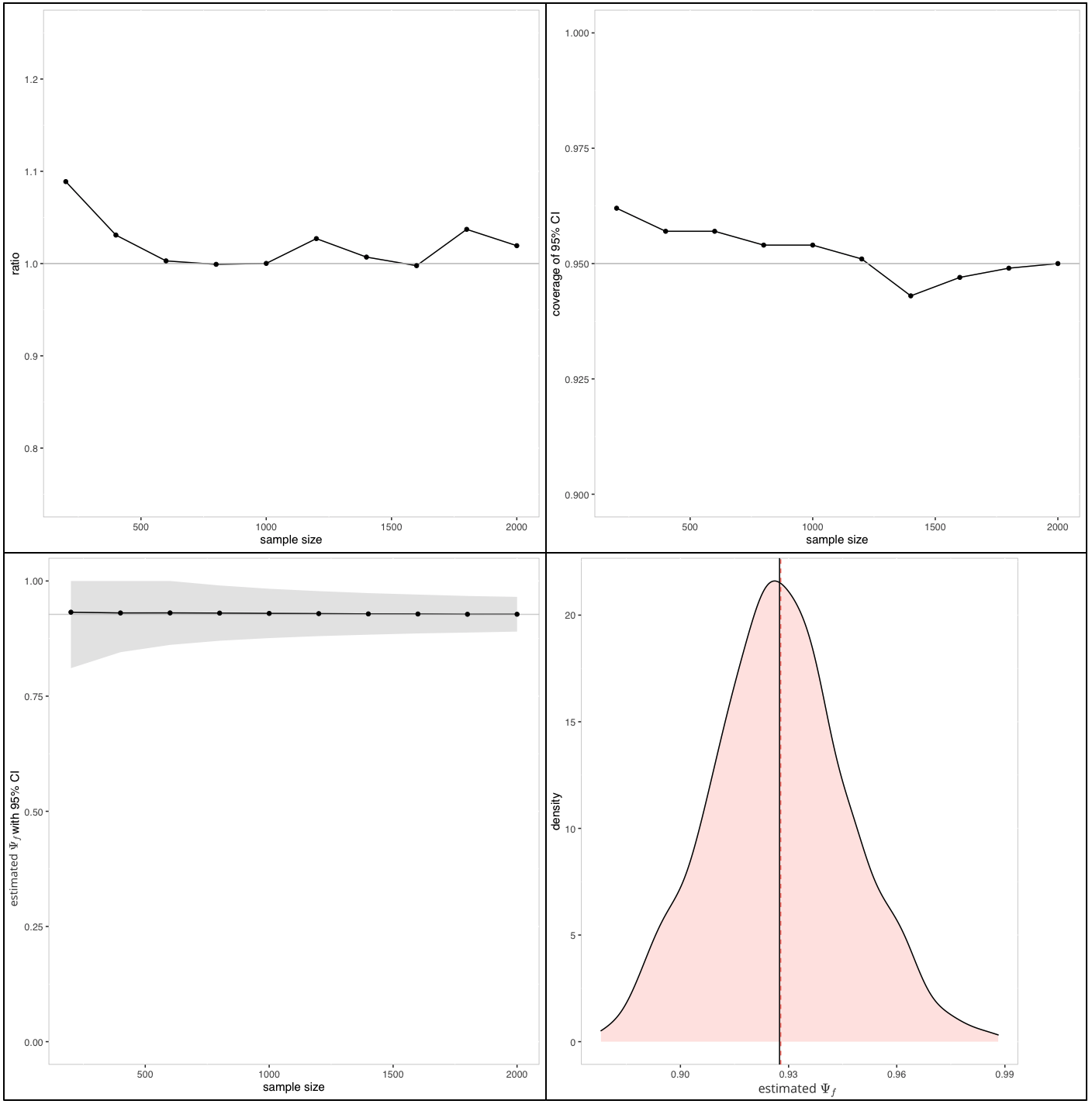}
    \caption{Simulation results when the linear identification assumption holds true.
    \textbf{Upper left}: ratio of estimated variance over true variance vs sample size. The line is the ratio at each sample size, and the grey horizontal line is the true value; 
    \textbf{Upper right}: coverage of 95\% asymptotic confidence interval based on normal approximation with $\hat \sigma^2$ estimated from efficient influence curve vs sample size. The line is the average coverage; 
    \textbf{Lower left}: mean value of estimated $\psi$ and its 95\% asymptotic confidence interval based on normal approximation with $\hat \sigma^2$ estimated from efficient influence curve vs sample size. The line is the mean, and shaded area is the confidence interval and the grey horizontal line is the true value; 
    \textbf{Lower right}: Distribution of 1000 estimates of $\psi$ for sample size of 1000, the vertical line is the true value, and the dashed line is the mean value.}
    \label{linear}
\end{figure}

\newpage

\subsection{Non-linear identification assumption}\label{nonlinear_section}
We perform simulations for three types of non-linear identification assumptions: 1) K-way interaction term equals zero in log-linear models, 2) independence assumption, and 3) conditional independence assumption. For assumption 1), we provide simulations for the following estimators:
\begin{enumerate}
    \item Existing estimator defined by model $M_0$, which assumes that in the log-linear model, all the main terms have the same values, and there are no interaction terms \cite{cormack1989log}: $\Psi_I(P_{M_0})$
    \item Existing estimator defined by model $M_t$, which assumes that the log-linear model does not contain interaction terms \cite{chao2001overview}: $\Psi_I(P_{M_t})$
    \item Non-parametric plug-in estimator: $\Psi_I(P_{NP})$
    \item Estimator based on undersmoothed lasso regression: $\Psi_I(P_{lasso})$
    \item Estimator based on lasso regression with cross-validation: $\Psi_I(P_{lasso\_cv})$
    \item TMLE based on $\Psi_I(P_{lasso})$: $\Psi_I(P_{tmle})$
    \item TMLE based on $\Psi_I(P_{lasso\_cv})$: $\Psi_I(P_{tmle\_cv})$
\end{enumerate}
Model $M_0$ and $M_t$ were calculated using R package "RCapture" (version 1.4-3) \cite{baillargeon2007rcapture}.

\subsection{Non-linear identification assumption: k-way interaction term equals zero} \label{kwayHolds}
Given that we have three samples $B_1, B_2$ and $B_3$, the identification assumption is that in the log-linear model
\[log(E_{B^*}(b)) = \alpha_0 + \alpha_1 b_1 + \alpha_2 b_2 + \alpha_3 b_3 + \alpha_4 b_1b_2 + \alpha_5 b_1b_3 + \alpha_6 b_2b_3 + \alpha_7 b_1b_2b_3,
\]
$\alpha_7 = 0$ (equation \ref{kway_constraint}). Here $E_{B^*}(b)$ is the count of observations in cell $b = (b_1, b_2, b_3)$, and $b_i = 1$ if the subject is captured by sample $i, i = 1,2 ,3$. The parameter $\Psi_I(P)$ is identified in equation \ref{PsiI}, and its influence curve $D^*_{\Phi_I}(P)$ is identified in equation \ref{efficphiI}. 

In this section, we evaluated both our proposed estimators as well as two existing parametric estimators. The first estimator we evaluated is the $M_t$ model \cite{schnabel1938estimation}, where $P_{ij}$ denotes the probability that subject $i$ is captured by sample $j$, and modeled as $log(P_{ij}) = \mu_j$ for subject $i$ and sample $j$. This estimator assumes that all the interaction terms are zero in the log-linear model, and only use the main term variables in model training. The second estimator is $M_{0}$ \cite{cormack1989log}, with the formula $log(P_{ij}) = \alpha$, where $\alpha$ is a constant. 

\subsubsection{Log-linear model with main term effects}\label{case1}

We used the parameters below to generate the underlying distribution. The assumption that the k-way interaction term $\alpha_7=0$ is satisfied in this setting. The model assumptions for $M_0$ and $M_t$ are also satisfied in this setting.

\begin{table}[H]
\resizebox{0.5 \textwidth}{!}{%
\begin{tabular}{|l|l|l|l|l|l|l|l|}
\hline
$\alpha_0$ & $\alpha_1$ & $\alpha_2$ & $\alpha_3$ & $\alpha_4$ & $\alpha_5$ & $\alpha_6$ & $\alpha_7$ \\ \hline
-0.9398 & -1 & -1 & -1 & 0 & 0 & 0 & 0 \\ \hline
\end{tabular}%
}
\end{table}
The simulated observed probabilities for all 7 observed cells are:

\begin{table}[H]
\resizebox{0.8 \textwidth}{!}{%
\begin{tabular}{|l|l|l|l|l|l|l|}
\hline
P(0,0,1) & P(0,1,0) & P(0,1,1) & P(1,0,0) & P(1,0,1) & P(1,1,0) & P(1,1,1) \\ \hline
0.2359 & 0.2359 & 0.0868 & 0.2359 & 0.0868 & 0.0868 & 0.0319 \\ \hline
\end{tabular}%
}
\end{table}

Under this setting, the true value of $\Psi_I(P)$ can be calculated analytically as $\Psi_I(P_0) = 0.6093$. Figure \ref{main_term} and table \ref{main_term_tab} shows the performance of estimators $\Psi_I(P_{M_0}), \Psi_I(P_{M_t}), \Psi_I(P_{NP})$, $\Psi_I(P_{lasso}), \Psi_I(P_{lasso\_cv}), \Psi_I(P_{tmle})$, and $\Psi_I(P_{tmle\_cv})$. Table \ref{MTable0} shows that the model fit statistics for$\Psi_I(P_{M_0})$ and $\Psi_I(P_{M_t})$.

\begin{figure}[H]
\centering
    \includegraphics[width= \linewidth]{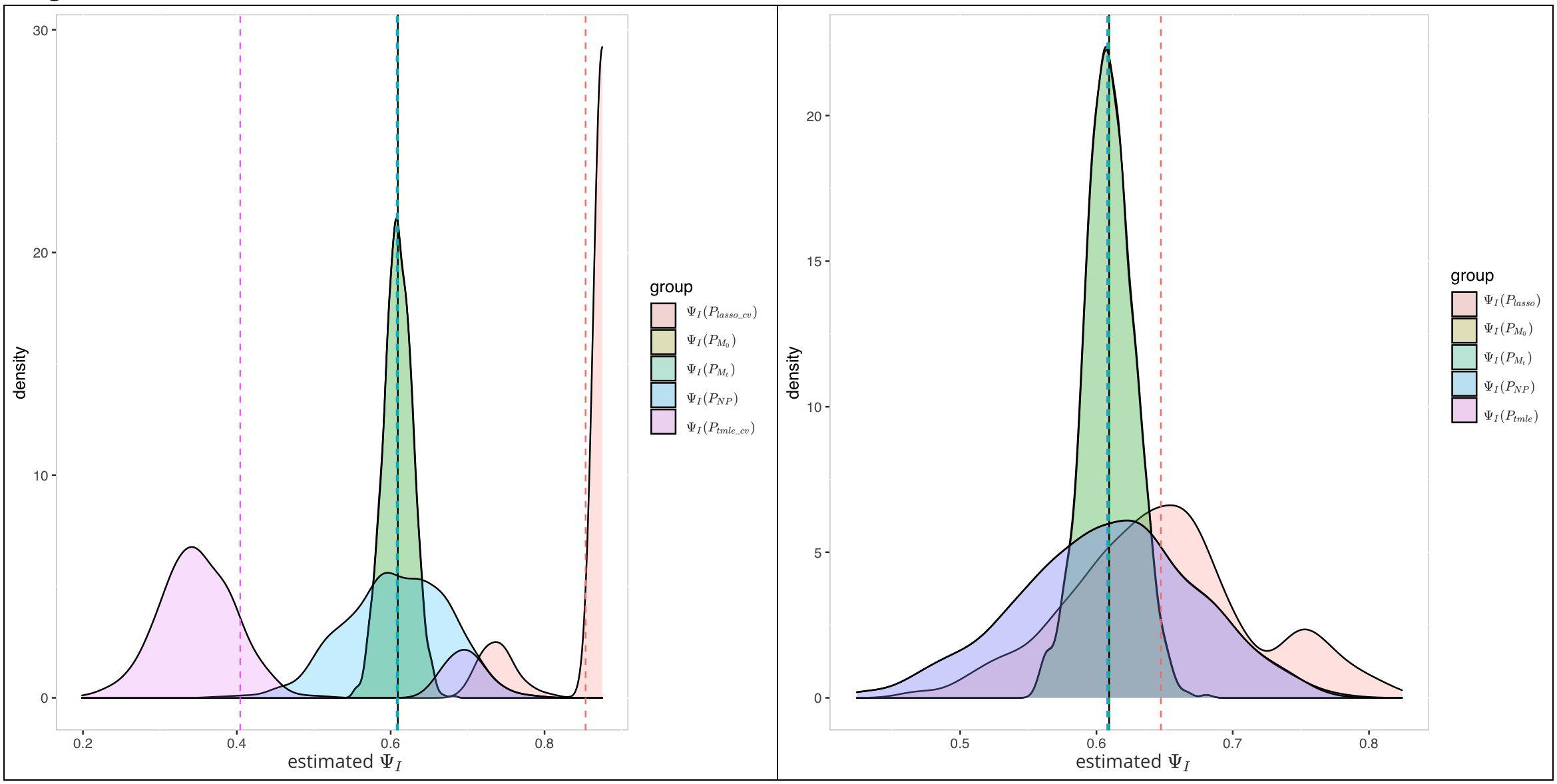}
    \caption{
    \textbf{Left}: Distribution of 1000 estimates of $\psi$ for sample size of 1000 with estimators $\Psi_I(P_{lasso\_cv})$ and $\Psi_I(P_{tmle\_cv})$, the black vertical line is the true value, and the dashed vertical lines represent mean values for each estimator;\textbf{Right}: Distribution of 1000 estimates of $\psi$ for sample size of 1000 with estimators $\Psi_I(P_{lasso})$ and $\Psi_I(P_{tmle})$, the black vertical line is the true value, and the dashed vertical lines represent mean values for each estimator.}
    \label{main_term}
\end{figure}

\begin{table}[H]
\centering
\resizebox{0.8\textwidth}{!}{%
\begin{tabular}{lllll}
\hline
Estimator   & Average $\psi$    & Lower 95\% CI  & Upper 95\% CI & Coverage(\%) \\ \hline
$\Psi_I(P_{NP})$       & 0.6078 & 0.4791 & 0.7366 & 95       \\
$\Psi_I(P_{lasso})$ & 0.6473 & 0.5237 & 0.7708 & 84.2\\
$\Psi_I(P_{tmle})$ & 0.6078 & 0.4806 & 0.735 & 93.7\\
$\Psi_I(P_{lasso\_cv})$ & 0.8534 & 0.7984 & 0.9083 & 4.4      \\
$\Psi_I(P_{tmle\_cv})$ & 0.4044 & 0.2988 & 0.5101 & 13.6     \\
$\Psi_I(P_{M_0})$       & 0.6093 & 0.5659 & 0.6521 & 97.8     \\
$\Psi_I(P_{M_t})$       & 0.6096 & 0.5662 & 0.6525 & 97.8     \\ \hline
\end{tabular}%
}
\caption{Estimated $\hat \psi$ and 95\% asymptotic confidence interval based on normal approximation with $\hat \sigma^2$ estimated from efficient influence curve and its coverage by each estimator. $\Psi_I(P_{NP})$ is the plug-in maximum likelihood estimator, $\Psi_I(P_{lasso})$ uses probabilities estimated from undersmoothed lasso regression, $\Psi_I(P_{tmle})$ is the TMLE based on $\Psi_I(P_{lasso})$, $\Psi_I(P_{lasso\_cv})$ uses probabilities estimated from lasso regression with regularization term optimized by cross-validation, $\Psi_I(P_{tmle\_cv})$ is the TMLE based on $\Psi_I(P_{lasso\_cv})$, $\Psi_I(P_{M_0})$ and $\Psi_I(P_{M_t})$ are existing estimators defined in section \ref{nonlinear_section}. True $\psi_0 = 0.6093$.}
\label{main_term_tab}
\end{table}

\begin{table}[!ht]
\centering
\resizebox{0.4 \textwidth}{!}{%
\begin{tabular}{llll}
\hline
Estimator & Df & AIC & BIC  \\ \hline
$\Psi_I(P_{M_0})$ & 5 & 56.532 & 66.348 \\
$\Psi_I(P_{M_t})$ & 3 & 58.034 & 77.665 \\\hline
\end{tabular}%
}
\caption{model fit statistics for $\Psi_I(P_{M_0})$ and $\Psi_I(P_{M_t})$ estimators.}
\label{MTable0}
\end{table}
Figure \ref{main_term} and table \ref{main_term_tab} compared the performance of two base learners: lasso regression with regularization term chosen by cross-validation ($\Psi_I(P_{lasso\_cv})$), and undersmoothed lasso regression ($\Psi_I(P_{lasso})$).The $\Psi_I(P_{lasso\_cv})$ estimator is significantly biased from the true value of $\psi$, and the TMLE estimator based on it, $\Psi_I(P_{tmle\_cv})$ estimator is also biased. And although the $\Psi_I(P_{lasso})$ estimator is biased, the TMLE based on it, $\Psi_I(P_{tmle})$ estimator ($mean = 0.6078$, $coverage = 93.7\%$) is able to adjust the bias and give a fit as good as the NPMLE estimator ($mean = 0.6078$, $coverage = 95\%$). Thus, in the section below we will only use the undersmooting lasso estimator $\Psi_I(P_{lasso})$ as the base learner.

\newpage
\subsubsection{Log-linear model with main term and interaction term effects}

We used the parameters below to generate the underlying distribution. The assumption that the k-way interaction term $\alpha_7=0$ is satisfied in this setting.

\begin{table}[!ht]
\resizebox{0.6 \textwidth}{!}{%
\begin{tabular}{|l|l|l|l|l|l|l|l|}
\hline
$\alpha_0$ & $\alpha_1$ & $\alpha_2$ & $\alpha_3$ & $\alpha_4$ & $\alpha_5$ & $\alpha_6$ & $\alpha_7$ \\ \hline
-0.9194 & -1 & -1 & -1 & -0.1 & -0.1 & -0.1 & 0 \\ \hline
\end{tabular}%
}
\end{table}
The simulated observed probabilities for all 7 observed cells are:

\begin{table}[!ht]
\resizebox{0.8 \textwidth}{!}{%
\begin{tabular}{|l|l|l|l|l|l|l|}
\hline
P(0,0,1) & P(0,1,0) & P(0,1,1) & P(1,0,0) & P(1,0,1) & P(1,1,0) & P(1,1,1) \\ \hline
0.2440 & 0.2440 & 0.0812 & 0.2440 & 0.0812 & 0.0812 & 0.0245 \\ \hline
\end{tabular}%
}
\end{table}

Under this setting, the true value of $\Psi_I(P)$ can be calculated analytically as $\Psi_I(P_0) = 0.6013$. The model assumptions of $\Psi_I(P_{M_0})$ and $\Psi_I(P_{M_t})$ are violated, and their estimates are biased (estimated mean $\hat \Psi_I(P_{M_0}) = \hat \Psi_I(P_{M_t}) = 0.572, bias = \Psi_I(P_0) - \hat \Psi_I(P_{M_t}) = 0.6013-0.5720 = 0.0293$). The coverage of their 95\% confidence interval is also low (81.5\%). Table \ref{MTable1} shows that the model fit statistics for$\Psi_I(P_{M_0})$ and $\Psi_I(P_{M_t})$. As the $ \Psi_I(P_{NP})$ and $\Psi_I(P_{tmle})$ estimators do not have model assumptions, they are unbiased and the coverage of their 95\% asymptotic confidence intervals are close to 95\%. (94.7\% for $ \Psi_I(P_{NP})$ and $93.7\%$ for $\Psi_I(P_{tmle})$), shown in figure \ref{ls_no_missing} and table \ref{las_tab1}.  

\begin{figure}[H]
\centering
    \includegraphics[width= 0.8\linewidth]{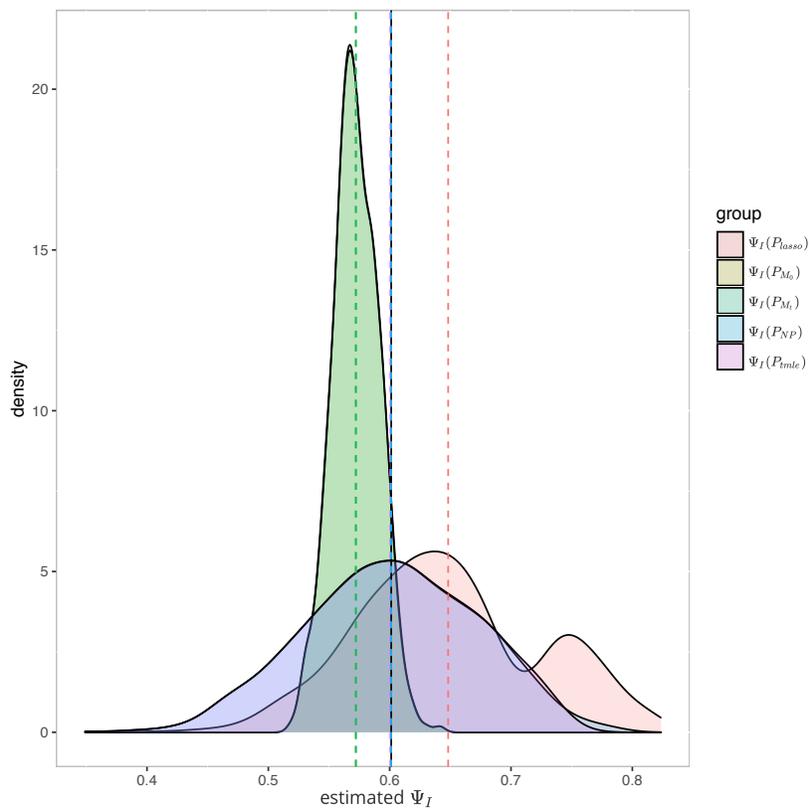}
    \caption{Distribution of 1000 estimates of $\psi$ for sample size of 1000, the black vertical line is the true value, and the dashed vertical lines represent mean values for each estimator.. }
    \label{ls_no_missing}
\end{figure}

\begin{table}[!ht]
\centering
\resizebox{0.8 \textwidth}{!}{%
\begin{tabular}{lllll}
\hline
Estimator   & Average $\psi$    & Lower 95\% CI  & Upper 95\% CI & Coverage(\%) \\ \hline
$\Psi_I(P_{NP})$ & 0.6013 & 0.4625 & 0.7401 & 94.7 \\
$\Psi_I(P_{lasso})$ & 0.6482 & 0.5159 & 0.7805 & 78.4 \\
$\Psi_I(P_{tmle})$ & 0.6005 & 0.4653 & 0.7358 & 93.7 \\
$\Psi_I(P_{M_0})$ & 0.572 & 0.5274 & 0.6163 & 81.5 \\
$\Psi_I(P_{M_t})$ & 0.5722 & 0.5277 & 0.6166 & 81.5 \\\hline
\end{tabular}%
}
\caption{Estimated $\hat \psi$ and 95\% asymptotic confidence interval based on normal approximation with $\hat \sigma^2$ estimated from efficient influence curve and its coverage by each estimator. $\Psi_I(P_{NP})$ is the plug-in maximum likelihood estimator, $\Psi_I(P_{lasso})$ uses probabilities estimated from undersmoothed lasso regression, $\Psi_I(P_{tmle})$ is the TMLE based on $\Psi_I(P_{lasso})$, $\Psi_I(P_{M_0})$ and $\Psi_I(P_{M_t})$ are existing estimators defined in section \ref{nonlinear_section}. True $\Psi_I(P_0) = 0.6013$.}
\label{las_tab1}
\end{table}

\begin{table}[H]
\centering
\resizebox{0.4 \textwidth}{!}{%
\begin{tabular}{llll}
\hline
Estimator & Df & AIC & BIC  \\ \hline
$\Psi_I(P_{M_0})$ & 5 & 55.499 & 65.314 \\
$\Psi_I(P_{M_t})$ & 3 & 58.440 & 78.071 \\ \hline
\end{tabular}%
}
\caption{model fit statistics for$\Psi_I(P_{M_0})$ and $\Psi_I(P_{M_t})$ estimators.}
\label{MTable1}
\end{table}

\newpage
\subsubsection{Log-linear model with empty cells}
When the probability for some cells are close to zero, in the finite sample case, there are likely to be empty cells in the observed data. For example, if the probability of a subject being caught in all 3 samples, represented as $P(1,1,1)$, is less than $10^{-6}$ and the total number of unique subjects caught by any of the 3 samples is less than $10^3$, then it's likely that we will not observe any subject being caught three times, i.e., cell $P(1,1,1)$ will likely be empty.

We used the parameters below to generate the underlying distribution. The assumption that the k-way interaction term $\alpha_7=0$ is satisfied in this setting.

\begin{table}[!ht]
\resizebox{0.55 \textwidth}{!}{%
\begin{tabular}{|l|l|l|l|l|l|l|l|}
\hline
$\alpha_0$ & $\alpha_1$ & $\alpha_2$ & $\alpha_3$ & $\alpha_4$ & $\alpha_5$ & $\alpha_6$ & $\alpha_7$ \\ \hline
-0.4578 & -1 & -2 & -3 & -1 & -1 & -1 & 0 \\ \hline
\end{tabular}%
}
\end{table}

The simulated observed probabilities for all 7 observed cells are:

\begin{table}[!ht]
\resizebox{0.8 \textwidth}{!}{%
\begin{tabular}{|l|l|l|l|l|l|l|}
\hline
P(0,0,1) & P(0,1,0) & P(0,1,1) & P(1,0,0) & P(1,0,1) & P(1,1,0) & P(1,1,1) \\ \hline
0.0857 & 0.2331 & 0.0043 & 0.6336 & 0.0116 & 0.0315 & 2e-04 \\ \hline
\end{tabular}%
}
\end{table}

Figure \ref{missing} and table \ref{las_tab2} show the performance of the estimators when there's no observation in cell $(1,1,1)$, . 

Figure \ref{missing} shows that all the existing estimators are biased when empty cells of $(1,1,1)$ exist. Table \ref{las_tab2} shows that the coverage of the 95\% asymptotic confidence intervals for $\Psi_I(P_{tmle})$ is the highest (58.8\%), and the coverage of the 95\% asymptotic confidence intervals for $\Psi_I(P_{M_0})$, $\Psi_I(P_{M_t})$ are both 0, due to the estimation bias and narrow range of intervals. Table \ref{MTable2} shows the model fit statistics for$\Psi_I(P_{M_0})$ and $\Psi_I(P_{M_t})$. 

\begin{figure}[!ht]
\centering
    \includegraphics[width= 0.8\linewidth]{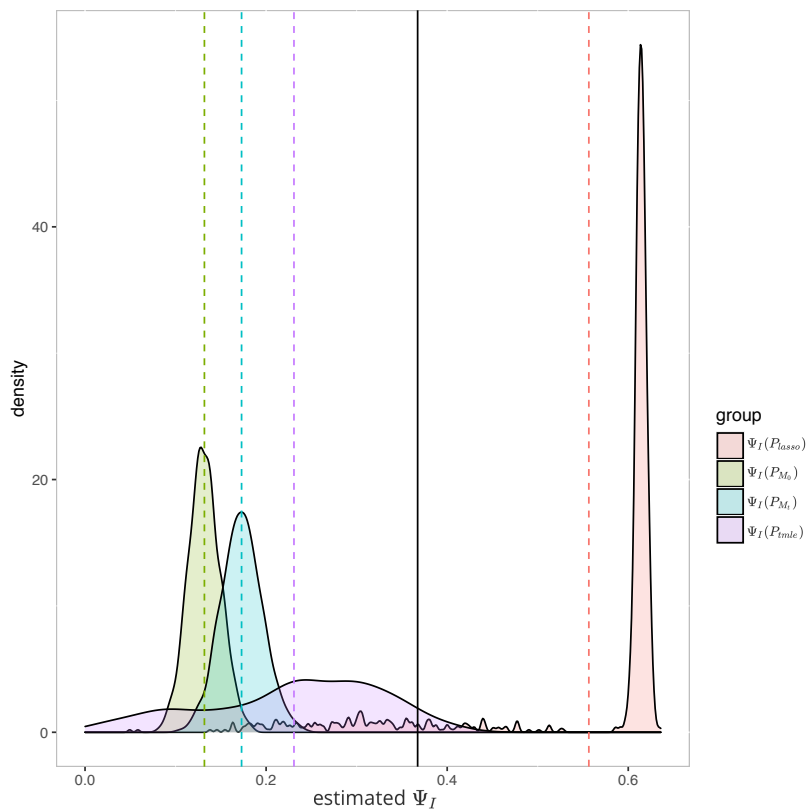}
    \caption{Distribution of 1000 estimates of $\psi$ for sample size of 1000, the black vertical line is the true value of $\Psi_I(P_0) = 0.3674$, and the dashed vertical lines represent mean values for each estimator.}
    \label{missing}
\end{figure}

\begin{table}[!ht]
\centering
\resizebox{0.8 \textwidth}{!}{%
\begin{tabular}{lllll}
\hline
Estimator & Average $\psi$ & Lower 95\% CI & Upper 95\% CI & Coverage(\%)\\ \hline
$\Psi_I(P_{lasso})$ & 0.5565 & 0.1956 & 0.9174 & 20\\
$\Psi_I(P_{tmle})$ & 0.2308 & 0.0875 & 0.374 & 58.8\\
$\Psi_I(P_{M_0})$ & 0.1318 & 0.0995 & 0.169 & 0\\
$\Psi_I(P_{M_t})$ & 0.1729 & 0.1313 & 0.2201 & 0\\\hline
\end{tabular}%
}
\caption{Estimated $\hat \psi$, 95\% asymptotic confidence interval based on normal approximation with $\hat \sigma^2$ estimated from efficient influence curve and its coverage of the  estimators in figure \ref{missing}. $\Psi_I(P_{lasso})$ uses probabilities estimated from undersmoothed lasso regression, $\Psi_I(P_{tmle})$ is the TMLE based on $\Psi_I(P_{lasso})$, $\Psi_I(P_{M_0})$ and $\Psi_I(P_{M_t})$ are existing estimators defined in section \ref{nonlinear_section}.True $\Psi_I(P_0) = 0.3674$.}
\label{las_tab2}
\end{table}

\begin{table}[!ht]
\centering
\resizebox{0.4 \textwidth}{!}{%
\begin{tabular}{llll}
\hline
Estimator & Df & AIC & BIC  \\ \hline
$\Psi_I(P_{M_0})$ & 5 & 501.562 & 511.378 \\
$\Psi_I(P_{M_t})$ & 3 & 45.506 & 65.137 \\ \hline
\end{tabular}%
}
\caption{model fit statistics for$\Psi_I(P_{M_0})$ and $\Psi_I(P_{M_t})$ estimators.}
\label{MTable2}
\end{table}

\subsection{Non-linear identification assumption: independence}\label{indep_section}

Given 3 samples, we assume that the first and second sample $B_1, B_2$ are independent of each other, that is, $P^*(B_1 = 1, B_2 = 1) = P^*(B_1 = 1) \times P^*(B_2 = 1)$. Then we have $\Psi_{II}(P)$ identified in equation \ref{PsiII}, and the influence curve $D^*_{II}(P)$ identified in equation \ref{efficphiII}.

Here we illustrate the performance of our estimators with the following underlying distribution:

\begin{align*}
& P(B_1 = 1) = 0.1\\
& P(B_2 = 1 \vert B_1 = 1) = 0.2\\
& P(B_2 = 1 \vert B_1 = 0) = 0.2\\
& P(B_3 = 1 \vert B_1 = 1) = 0.25\\
& P(B_3 = 1 \vert B_1 = 0) = 0.3\\
\end{align*}

The probability distribution for all 7 observed cells are:

\begin{table}[!ht]
\resizebox{0.8 \textwidth}{!}{%
\begin{tabular}{|l|l|l|l|l|l|l|}
\hline
P(0,0,1) & P(0,1,0) & P(0,1,1) & P(1,0,0) & P(1,0,1) & P(1,1,0) & P(1,1,1) \\ \hline
0.4355 & 0.2540 & 0.1089 & 0.1210 & 0.0403 & 0.0302 & 0.0101\\ \hline
\end{tabular}%
}
\end{table}

We can calculate the true value of $\Psi_{II}(P)$ analytically as $\Psi_{II}(P_0) = 0.4960$, and draw $10^6$ samples to obtain the asymptotic $\Psi_{II}(P_n) = 0.4963$, asymptotic $\sigma^2 = \frac{1}{n} \sum D^{*2} = 4.2657$.

Figure \ref{indep} shows that our estimators perform well when the independence assumptions hold true.
\begin{figure}[H]
\centering
    \includegraphics[width= \linewidth]{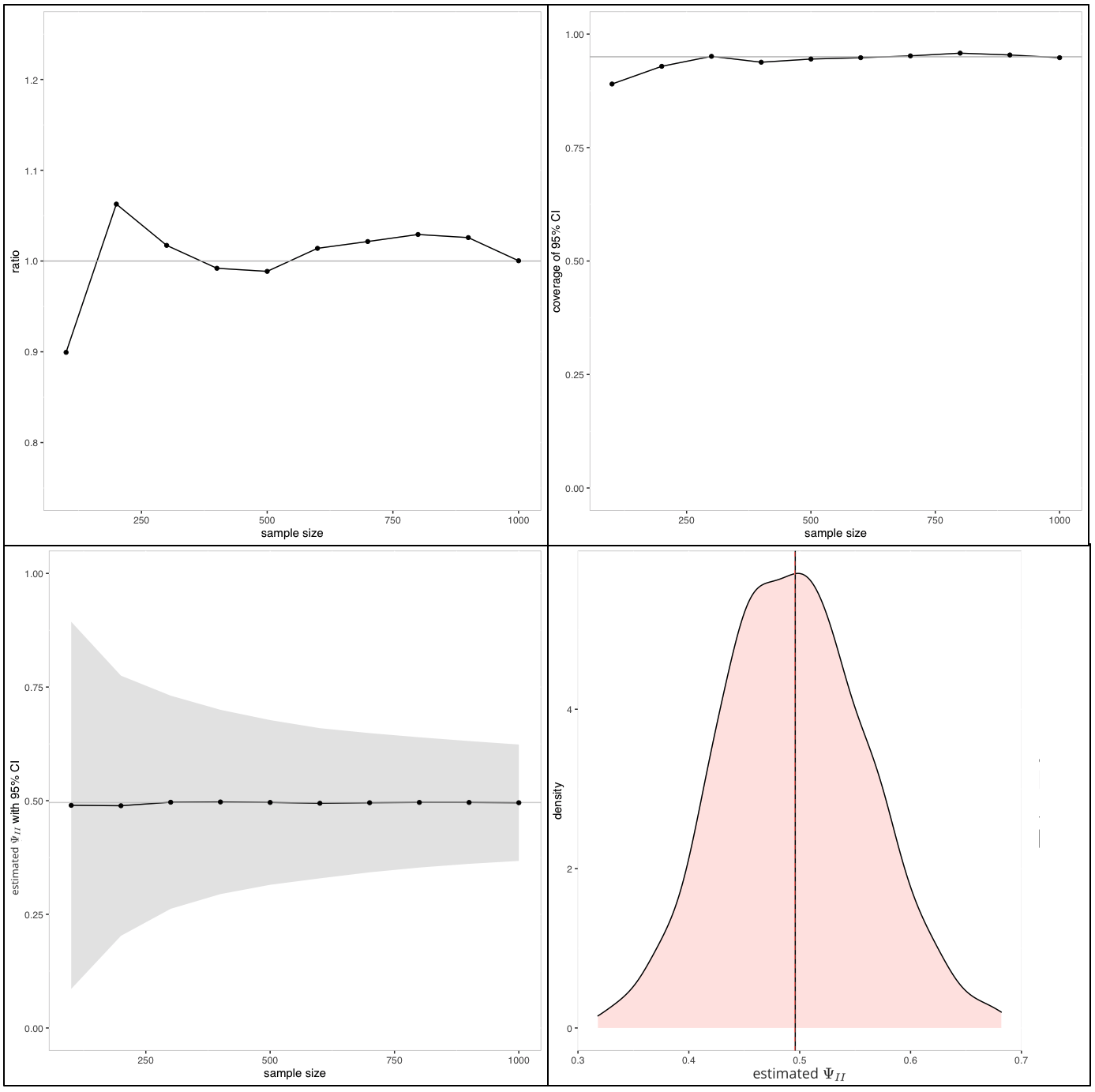}
    \caption{Simulation results when the independence identification assumption holds true.
    \textbf{Upper left}: ratio of estimated variance over true variance vs sample size. The line is the ratio at each sample size, and the grey horizontal line is the true value; 
    \textbf{Upper right}: coverage of 95\% asymptotic confidence interval based on normal approximation with $\hat \sigma^2$ estimated from efficient influence curve vs sample size. The line is the average coverage; 
    \textbf{Lower left}: mean value of estimated $\psi$ and its 95\% asymptotic confidence interval based on normal approximation with $\hat \sigma^2$ estimated from efficient influence curve vs sample size. The line is the mean, and shaded area is the confidence interval and the grey horizontal line is the true value; 
    \textbf{Lower right}: Distribution of 1000 estimates of $\psi$ for sample size of 1000, the vertical line is the true value, and the dashed line is the mean value.}
    \label{indep}
\end{figure}

\newpage

\subsection{Non-linear identification assumption: conditional independence}\label{cond_section}
Given 3 samples, we assume that the third sample $B_3$ is conditionally independent on the second sample $B_2$, that is, $P^*(B_3 = 1 | B_2 = b_2, B_1 = 0) = P^*(B_3 = 1| B_1 = 0)$. Then we can derive that $P^*(0,0,0) = \frac{P^*(0,1,0)P^*(0,0,1)}{P^*(0,1,1)}$ and $\Psi_{CI}(P) = \frac{P(0,1,1)}{P(0,1,1) + P(0,1,0)P(0,0,1)}$ from equation \ref{Psi_CI_Eq}, and the efficient influence curve can be derived from equation \ref{efficphiCI} as
\begin{eqnarray*}
\lefteqn{D^*_{\Phi_{CI}}(P) = } \\
& &  \frac{P(0,1,0)P(0,0,1)}{[P(0,1,1) + P(0,1,0)P(0,0,1)]^2} [I(0,1,1) - P(0,1,1)] - \\
& & \frac{P(0,0,1)P(0,1,1)}{[P(0,1,1) + P(0,1,0)P(0,0,1)]^2} [I(0,1,0) - P(0,1,0)]
- \\
& & \frac{P(0,1,0)P(0,1,1)}{[P(0,1,1) + P(0,1,0)P(0,0,1)]^2} [I(0,0,1) - P(0,0,1)]
\end{eqnarray*}

Here we illustrate the performance of our estimators with the following underlying distribution:

\begin{align*}
& P(B_1 = 1) = 0.1\\
& P(B_2 = 1 \vert B_1 = 1) = 0.2\\
& P(B_2 = 1 \vert B_1 = 0) = 0.15\\
& P(B_3 = 1 \vert B_1 = 1) = P(B_3 = 1 \vert B_2 = 1, B_1 = 1) = 0.25\\
& P(B_3 = 1 \vert B_1 = 1) = P(B_3 = 1 \vert B_2 = 0, B_1 = 1) = 0.25\\
& P(B_3 = 1 \vert B_1 = 0) = P(B_3 = 1 \vert B_2 = 1, B_1 = 0) = 0.2\\
& P(B_3 = 1 \vert B_1 = 0) = P(B_3 = 1 \vert B_2 = 0, B_1 = 0) = 0.2
\end{align*}

The probability distribution for all 7 observed cells $P(B_1, B_2, B_3)$ are:

\begin{table}[H]
\resizebox{0.8 \textwidth}{!}{%
\begin{tabular}{|l|l|l|l|l|l|l|}
\hline
P(0,0,1) & P(0,1,0) & P(0,1,1) & P(1,0,0) & P(1,0,1) & P(1,1,0) & P(1,1,1) \\ \hline
0.3943 & 0.2784 & 0.0696 & 0.1546 & 0.0515 & 0.0387 & 0.0129 \\ \hline
\end{tabular}%
}
\end{table}
We can derive the true $\Psi_{CI}(P)$ analytically as $\Psi_{CI}(P_0) = 0.388$, and draw $10^6$ samples to obtain the asymptotic $\hat \Psi_{CI}(P_n) = 0.3887$, asymptotic $\hat \sigma^2 = 1.0958$.

Figure \ref{ci1} shows that our estimator performs well when the conditional independence assumptions are met.
\begin{figure}[!ht]
\centering
    \includegraphics[width= 0.9\linewidth]{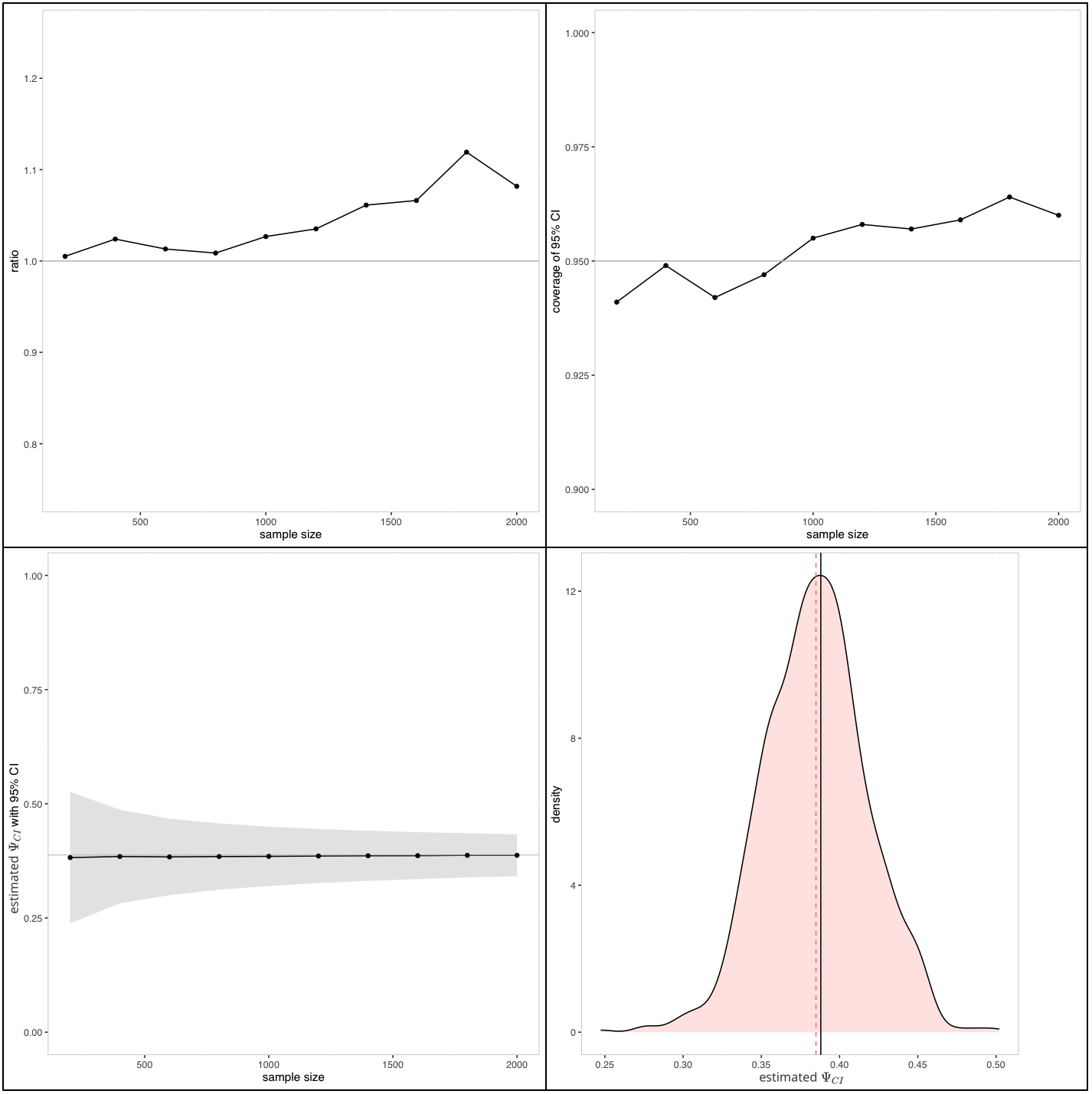}
    \caption{Simulation results when the conditional independence identification assumption holds true.
    \textbf{Upper left}: ratio of estimated variance over true variance vs sample size. The line is the ratio at each sample size, and the grey horizontal line is the true value; 
    \textbf{Upper right}: coverage of 95\% asymptotic confidence interval based on normal approximation with $\hat \sigma^2$ estimated from efficient influence curve vs sample size. The line is the average coverage; 
    \textbf{Lower left}: mean value of estimated $\psi$ and its 95\% asymptotic confidence interval based on normal approximation with $\hat \sigma^2$ estimated from efficient influence curve vs sample size. The line is the mean, and shaded area is the confidence interval and the grey horizontal line is the true value; 
    \textbf{Lower right}: Distribution of 1000 estimates of $\psi$ for sample size of 1000, the vertical line is the true value, and the dashed line is the mean value.}
    \label{ci1}
\end{figure}

\newpage

\clearpage

\section{Evaluating the sensitivity of identification bias to violations of the assumed constraint}\label{sensitivity}
In this section, we use simulations to show the sensitivity of identification bias for all the estimators with identification assumptions violations. 

\subsection{Violation of linear assumptions}
In this section we analyzed the same problem stated in section \ref{linear_section}, but the identification assumption is violated.

We used the parameters below to generate the underlying distribution. The assumption that the k-way interaction term $\alpha_7 = 0$ is violated in this setting($\alpha_7 = 0.2$).

\begin{table}[!ht]
\resizebox{0.6 \textwidth}{!}{%
\begin{tabular}{|l|l|l|l|l|l|l|l|}
\hline
$\alpha_0$ & $\alpha_1$ & $\alpha_2$ & $\alpha_3$ & $\alpha_4$ & $\alpha_5$ & $\alpha_6$ & $\alpha_7$ \\ \hline
0.11 & 0.1 & 0.05 & 0.08 & -0.2 & -0.2 & -0.1 & 0.2 \\ \hline
\end{tabular}%
}
\end{table}

The simulated observed probabilities for all 7 cells are:

\begin{table}[!ht]
\resizebox{0.8 \textwidth}{!}{%
\begin{tabular}{|l|l|l|l|l|l|l|}
\hline
P(0,0,1) & P(0,1,0) & P(0,1,1) & P(1,0,0) & P(1,0,1) & P(1,1,0) & P(1,1,1) \\ \hline
0.2135 & 0.1798 & 0.0449 & 0.2360 & 0.1011 & 0.1978 & 0.0449\\ \hline
\end{tabular}%
}
\end{table}

We can calculate the true value of $\Psi$ analytically as $\Psi_0 = 0.8900$, and draw $10^5$ samples to obtain the asymptotic $\Psi_f(P_n) = 0.7419$, asymptotic $\hat \sigma^2 = 0.2662$. The asymptotic value is biased by 0.1481.

Figure \ref{linear_violated} shows that the linear estimator is biased when the identification assumption is violated.
\begin{figure}[!ht]
\centering
    \includegraphics[width= \linewidth]{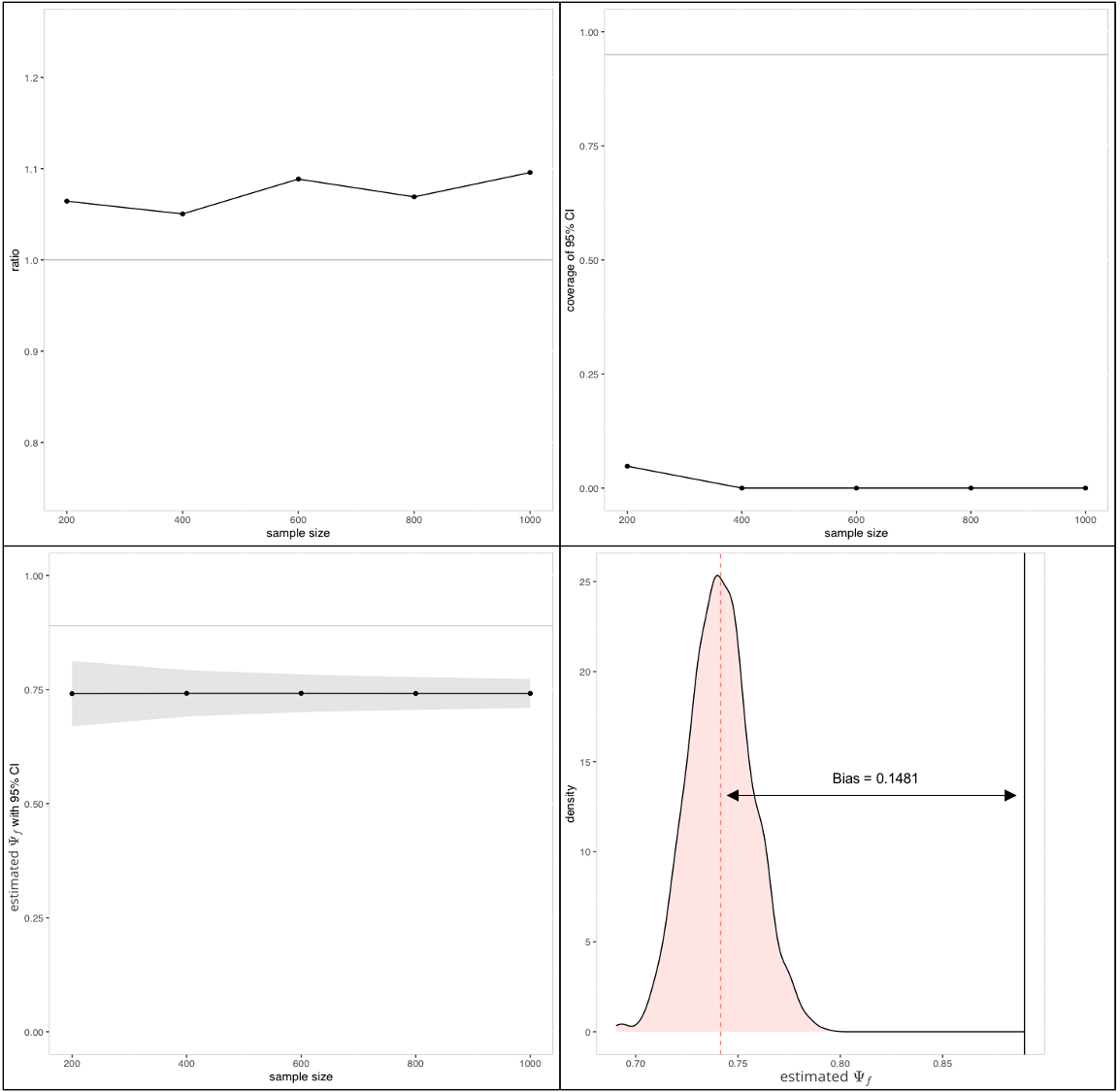}
    \caption{Simulation results when the linear identification assumption is violated.
    \textbf{Upper left}: ratio of estimated variance over true variance vs sample size. The line is the ratio at each sample size, and the grey horizontal line is the true value; 
    \textbf{Upper right}: coverage of 95\% asymptotic confidence interval based on normal approximation with $\hat \sigma^2$ estimated from efficient influence curve vs sample size. The line is the average coverage; 
    \textbf{Lower left}: mean value of estimated $\psi$ and its 95\% asymptotic confidence interval based on normal approximation with $\hat \sigma^2$ estimated from efficient influence curve vs sample size. The line is the mean, and shaded area is the confidence interval and the grey horizontal line is the true value; 
    \textbf{Lower right}: Distribution of 1000 estimates of $\psi$ for sample size of 1000, the vertical line is the true value, and the dashed line is the mean value.}
    \label{linear_violated}
\end{figure}

\newpage

\subsection{Violation of non-linear assumption: independence}

Given 3 samples, we suppose that the first and second sample $B_1, B_2$ are independent of each other, that is, $P^*(B_1 = 1, B_2 = 1) = P^*(B_1 = 1) \times P^*(B_2 = 1)$. Then we can compute $\hat \psi$ and its efficient influence curve as stated above.

Here we illustrate the performance of our estimators with the following underlying distribution:

\begin{align*}
& P(B_1 = 1) = 0.5\\
& P(B_2 = 1 \vert B_1 = 1) = 0.6\\
& P(B_2 = 1 \vert B_1 = 0) = 0.5\\
& P(B_3 = 1) = 0.5
\end{align*}

The probability for all 7 observed cells is given by:

\begin{table}[!ht]
\resizebox{0.8\textwidth}{!}{%
\begin{tabular}{|l|l|l|l|l|l|l|}
\hline
P(0,0,1) & P(0,1,0) & P(0,1,1) & P(1,0,0) & P(1,0,1) & P(1,1,0) & P(1,1,1) \\ \hline
0.1429 & 0.1429 & 0.1429 & 0.1143 & 0.1143 & 0.1714 & 0.1714 \\ \hline
\end{tabular}%
}
\end{table}

We can calculate the true value of $\Psi$ analytically as  $\Psi_0 = 0.875$, and draw $10^6$ samples to obtain the asymptotic $\Psi_{II}(P_n) = 0.9544$, asymptotic $\sigma^2 = 0.4415$, the asymptotic value is biased by 0.0794.

Figure \ref{in1} shows that violation of the independence assumption will create a significant bias in the results.
\begin{figure}[!ht]
\centering
    \includegraphics[width= 0.9\linewidth]{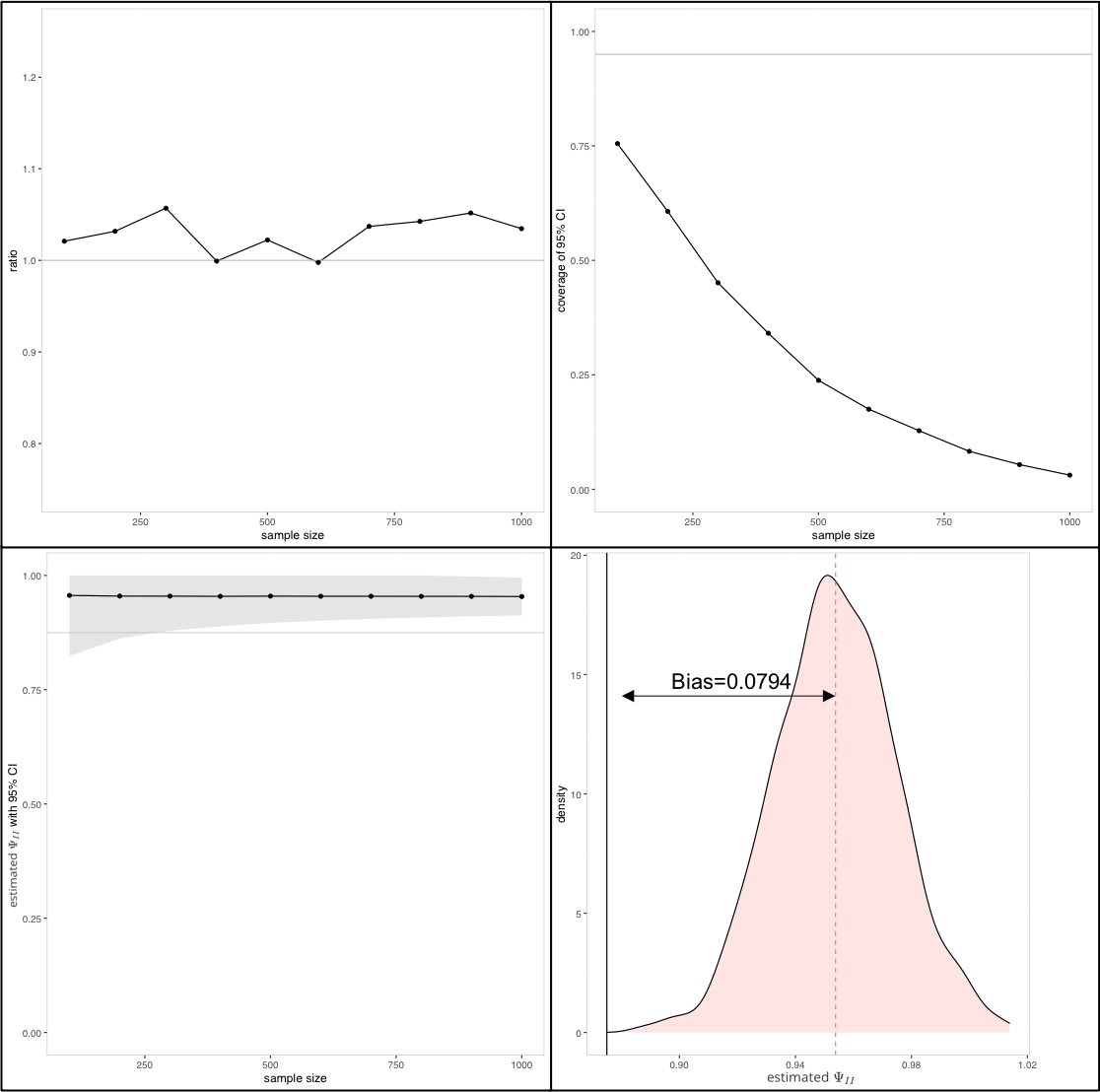}
    \caption{Simulation results when the independence identification assumption is violated.
    \textbf{Upper left}: ratio of estimated variance over true variance vs sample size. The line is the ratio at each sample size, and the grey horizontal line is the true value; 
    \textbf{Upper right}: coverage of 95\% asymptotic confidence interval based on normal approximation with $\hat \sigma^2$ estimated from efficient influence curve vs sample size. The line is the average coverage; 
    \textbf{Lower left}: mean value of estimated $\psi$ and its 95\% asymptotic confidence interval based on normal approximation with $\hat \sigma^2$ estimated from efficient influence curve vs sample size. The line is the mean, and shaded area is the confidence interval and the grey horizontal line is the true value; 
    \textbf{Lower right}: Distribution of 1000 estimates of $\psi$ for sample size of 1000, the vertical line is the true value, and the dashed line is the mean value.}
    \label{in1}
\end{figure}

\clearpage

\subsection{Violation of non-linear assumption: conditional independence}
Same as above, our assumption is that given 3 samples, the third sample $B_3$ is independent on the second sample $B_2$ conditional on the first sample $B_1$. Then we can derive the influence curve same as above. 

Here we illustrate the performance of our estimators when the identification assumption does not hold.

The simulated distribution is as follows:
\begin{align*}
& P(B_1 = 1) = 0.1\\
& P(B_2 = 1 \vert B_1 = 1) = 0.2\\
& P(B_2 = 1 \vert B_1 = 0) = 0.15\\
& P(B_3 = 1 \vert B_1 = 1, B_2 = 1) = 0.10\\
& P(B_3 = 1 \vert B_1 = 1, B_2 = 0) = 0.50\\
& P(B_3 = 1 \vert B_1 = 0, B_2 = 1) = 0.20\\
& P(B_3 = 1 \vert B_1 = 0, B_2 = 0) = 0.50\\
\end{align*}

The probability distribution for all 7 observed cells are:

\begin{table}[!ht]
\resizebox{0.8 \textwidth}{!}{%
\begin{tabular}{|l|l|l|l|l|l|l|}
\hline
P(0,0,1) & P(0,1,0) & P(0,1,1) & P(1,0,0) & P(1,0,1) & P(1,1,0) & P(1,1,1) \\ \hline
0.6194 & 0.1749 & 0.0437 & 0.0648 & 0.0648 & 0.0291 & 0.0032 \\ \hline
\end{tabular}%
}
\end{table}

The true value of $\Psi$ is $\Psi_0 = 0.6175$, and we drew $10^6$ samples and obtained the asymptotic $\Psi_{CI}(P_n) = 0.2931$, asymptotic $\sigma^2 = 1.2349$, the asymptotic value is biased by 0.3244.

Figure \ref{cd} shows that with a strong violation of conditional independence assumptions, the estimated values will have significant biases.

\begin{figure}[!ht]
\centering
    \includegraphics[width= 0.9\linewidth]{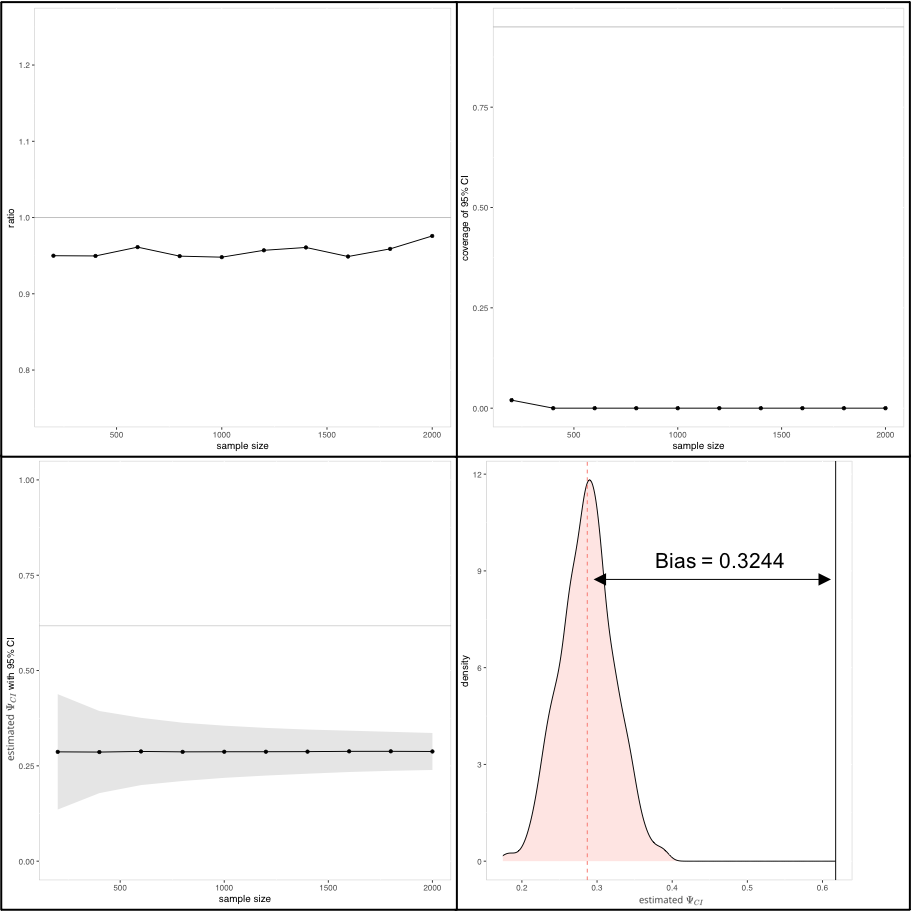}
    \caption{Simulation results when the conditional independence identification assumption is violated.
    \textbf{Upper left}: ratio of estimated variance over true variance vs sample size. The line is the ratio at each sample size, and the grey horizontal line is the true value; 
    \textbf{Upper right}: coverage of 95\% asymptotic confidence interval based on normal approximation with $\hat \sigma^2$ estimated from efficient influence curve vs sample size. The line is the average coverage; 
    \textbf{Lower left}: mean value of estimated $\psi$ and its 95\% asymptotic confidence interval based on normal approximation with $\hat \sigma^2$ estimated from efficient influence curve vs sample size. The line is the mean, and shaded area is the confidence interval and the grey horizontal line is the true value; 
    \textbf{Lower right}: Distribution of 1000 estimates of $\psi$ for sample size of 1000, the vertical line is the true value, and the dashed line is the mean value.}
    \label{cd}
\end{figure}

\newpage

\subsection{Violation of non-linear assumption: K-way interaction term equals zero}
Here we illustrate the performance of our estimators when the identification assumption that the k-way interaction term $\alpha_7=0$ does not hold. We used the parameters below to generate the underlying distribution. The underlying distribution follows the same model as in section \ref{kwayHolds}.

\begin{table}[!ht]
\resizebox{0.55 \textwidth}{!}{
\begin{tabular}{|l|l|l|l|l|l|l|l|}
\hline
$\alpha_0$ & $\alpha_1$ & $\alpha_2$ & $\alpha_3$ & $\alpha_4$ & $\alpha_5$ & $\alpha_6$ & $\alpha_7$ \\ \hline
-1.6333 & 0 & 0 & 0 & -1 & -2 & -0.5 & 1 \\ \hline
\end{tabular}%
}
\end{table}

The simulated observed probabilities for all 7 observed cells are:

\begin{table}[!ht]
\resizebox{0.8\textwidth}{!}{%
\begin{tabular}{|l|l|l|l|l|l|l|}
\hline
P(0,0,1) & P(0,1,0) & P(0,1,1) & P(1,0,0) & P(1,0,1) & P(1,1,0) & P(1,1,1) \\ \hline
0.2386 & 0.2386 & 0.0878 & 0.2386 & 0.0323 & 0.1447 & 0.0196 \\ \hline
\end{tabular}%
}
\end{table}

Figure \ref{strong_violation} and table \ref{strong_violation_table} show that when the assumption is heavily violated, all estimators are significantly biased. The true value of $\psi = 0.8074$. The bias for $ \Psi_I(P_{NP})$ estimator is 0.2032, for $\Psi_I(P_{lasso})$ estimator is 0.1372, for $\Psi_I(P_{tmle})$ is 0.2055, for $\Psi_I(P_{M_0})$ estimator is 0.2217, and for $\Psi_I(P_{M_t})$ estimator is 0.2185. The coverage of 95\% asymptotic confidence intervals for $ \Psi_I(P_{NP})$ estimator is 26\%, for $\Psi_I(P_{lasso})$ estimator is 59\%, for $\Psi_I(P_{tmle})$ is 22\%, for $\Psi_I(P_{M_0})$ and $\Psi_I(P_{M_t})$ is 0\%. Table \ref{MTable4} shows the information on $M_0, M_t$ models.

\begin{figure}[!ht]
\centering
    \includegraphics[width= 0.8 \linewidth]{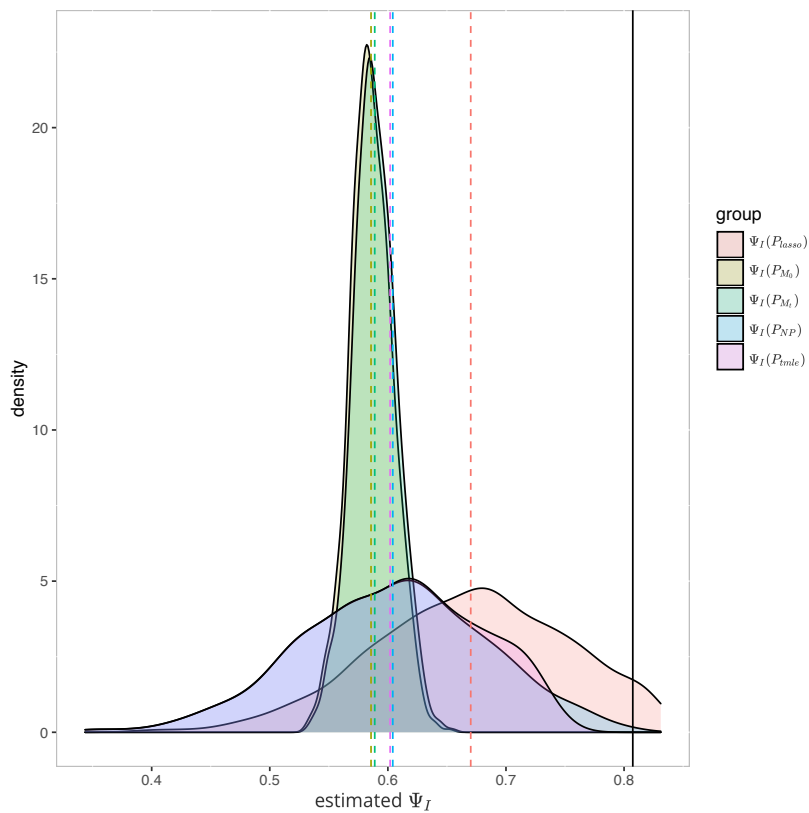}
    \caption{Distribution of 1000 estimates of $\Psi_I$ for sample size of 1000, the black vertical line is the true value, and the dashed vertical lines represent mean values for each estimator.}
    \label{strong_violation}
\end{figure}

\begin{table}[!ht]
\centering
\resizebox{0.8 \textwidth}{!}{%
\begin{tabular}{lllll}
\hline
Estimator   & average $\psi$    & lower 95\% CI  & upper 95\% CI & coverage(\%) \\ \hline
$\Psi_I(P_{NP})$ & 0.6042 & 0.4491 & 0.7592 & 26.3 \\
$\Psi_I(P_{lasso})$ & 0.6702 & 0.5261 & 0.8142 & 58.6 \\
$\Psi_I(P_{tmle})$ & 0.6019 & 0.4516 & 0.7523 & 21.9 \\
$\Psi_I(P_{M_0})$ & 0.5857 & 0.5416 & 0.6295 & 0 \\
$\Psi_I(P_{M_t})$ & 0.5889 & 0.5447 & 0.6328 & 0 \\\hline
\end{tabular}%
}
\caption{Estimated $\hat \psi$ and 95\% asymptotic confidence interval based on normal approximation with $\hat \sigma^2$ estimated from efficient influence curve and its coverage by each estimator. True $\Psi_0 = 0.8074$.}
\label{strong_violation_table}
\end{table}

\begin{table}[!ht]
\centering
\resizebox{0.4 \textwidth}{!}{%
\begin{tabular}{llll}
\hline
Estimator & df & AIC & BIC  \\ \hline
$\Psi_I(P_{M_0})$ & 5 & 157.720 & 167.535 \\
$\Psi_I(P_{M_t})$ & 3 & 128.623 & 148.254  \\ 
\hline
\end{tabular}%
}
\caption{model fit statistics for$\Psi_I(P_{M_0})$ and $\Psi_I(P_{M_t})$ estimators.}
\label{MTable4}
\end{table}

\newpage

\subsection{Violation of all identification assumptions}
In the sections above we showed that if an identification assumption is violated, its corresponding estimators will be biased and their asymptotic confidence intervals will have low coverage. In this section, all the identification assumptions are violated in the simulated distribution, and we analyzed the performance of all the estimators discussed above in this setting.

The underlying distribution of three samples $B_1, B_2, B_3$ follows the same log-linear model in section \ref{kwayHolds}, with the parameters as below.

\begin{table}[!ht]
\resizebox{0.55 \textwidth}{!}{
\begin{tabular}{|l|l|l|l|l|l|l|l|}
\hline
$\alpha_0$ & $\alpha_1$ & $\alpha_2$ & $\alpha_3$ & $\alpha_4$ & $\alpha_5$ & $\alpha_6$ & $\alpha_7$ \\ \hline
-1.1835 & -1 & -1 & -1 & -1.5 & -1 & 2 & 1 \\ \hline
\end{tabular}%
}
\end{table}

The simulated observed probability for all 7 observed cells is given by:

\begin{table}[!ht]
\resizebox{0.8 \textwidth}{!}{%
\begin{tabular}{|l|l|l|l|l|l|l|}
\hline
P(0,0,1) & P(0,1,0) & P(0,1,1) & P(1,0,0) & P(1,0,1) & P(1,1,0) & P(1,1,1) \\ \hline
0.1624 & 0.1624 & 0.0133 & 0.1624 & 0.0220 & 0.4414 & 0.0362 \\ \hline
\end{tabular}%
}
\end{table}

Figure \ref{violated_all_plot} and table \ref{all_violated} show that all estimators are significantly biased, when none of their identification assumption holds. The true value of $\psi = 0.6938$. Under the assumption that sample $B_1$ and $B_2$ are independent conditional on $B_3$ ("Conditional Independence" in table \ref{all_violated}), the plug-in estimator has a bias of -0.2497. Under the assumption that sample $B_1$ and $B_2$ are independent ("Independence" in table \ref{all_violated}), the plug-in estimator has a bias of 0.3760. Under the Under the assumption that the 3-way additive interaction term in the linear model equals zero ("K-way additive interaction equals zero" in table \ref{all_violated}), the plug-in estimator has a bias of -0.2627. And under the assumption that the 3-way multiplicative interaction term in the log-linear model equals zero ("K-way multiplicative interaction equals zero" in table \ref{all_violated}), the $\Psi_I(P_{NP})$ estimator has a bias of 0.2517, the $\Psi_I(P_{lasso})$ estimator has a bias of -0.1573, the $\Psi_I(P_{tmle})$ has a bias of -0.1867, the $\Psi_I(P_{M_0})$ estimator has a bias of -0.1017, and the $\Psi_I(P_{M_t})$ estimator has a bias of -0.1446. The coverage of 95\% asymptotic confidence intervals for all the estimators are far below 95\%. Table \ref{MTable4b} shows the information on $M_0, M_t$ models.

\begin{figure}[!ht]
\centering
    \includegraphics[width= 0.8 \linewidth]{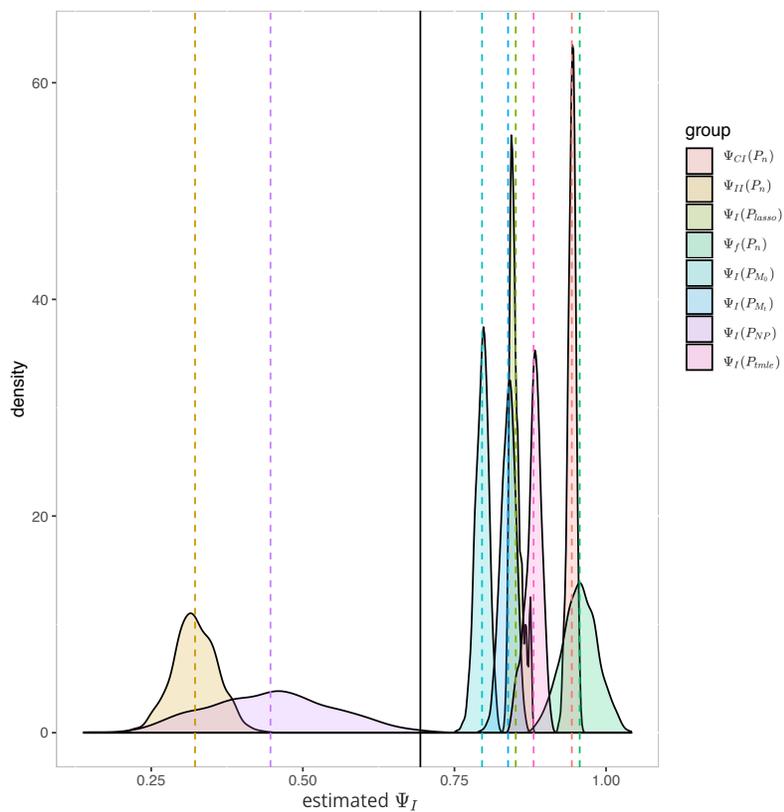}
    \caption{Distribution of 1000 estimates of $\psi$ for sample size of 1000, the black vertical line is the true value, and the dashed vertical lines represent mean values for each estimator.}
    \label{violated_all_plot}
\end{figure}

\begin{landscape}
\begin{table}[!ht]
\centering
\resizebox{1.2\textwidth}{!}{%
\begin{tabular}{llllll}
\hline
Assumption & Estimator  & Average $\psi$& Lower 95\% CI  & Upper 95\% CI & Coverage(\%) \\ \hline

Conditional independence & $\Psi_{CI}(P_n)$ & 0.9435 & 0.9313 & 0.9557 & 31.2\\
Independence & $\Psi_{II}(P_n)$ & 0.3223 & 0.2472 & 0.3974 & 0\\
Highest-way interaction equals zero (linear)& $\Psi_f(P_n)$ & 0.9565 & 0.8999 & 1.0132 & 0\\
Highest-way interaction equals zero (log-linear) & $\Psi_I(P_{NP})$ & 0.4466 & 0.2519 & 0.6413 & 0\\
Highest-way interaction equals zero (log-linear)& $\Psi_I(P_{lasso})$ & 0.8511 & 0.8005 & 0.9017 & 0\\
Highest-way interaction equals zero (log-linear)& $\Psi_I(P_{tmle})$ & 0.8805 & 0.8385 & 0.9225 & 31.2\\
Highest-way interaction equals zero (log-linear)& $\Psi_I(P_{M_0})$ & 0.7955 & 0.7621 & 0.8269 & 0\\
Highest-way interaction equals zero (log-linear)& $\Psi_I(P_{M_t})$ & 0.8384 & 0.8074 & 0.8673 & 0\\\hline
\end{tabular}%
}
\caption{Estimated $\hat \psi$ and 95\% asymptotic confidence interval based on normal approximation with $\hat \sigma^2$ estimated from efficient influence curve and its coverage for each estimator. True $\psi_0 = 0.6938$.
Conditional independence assumption: sample $B_1$ and $B_2$ are independent conditional on $B_3$; Independence assumption: sample $B_1$ and $B_2$ are independent; K-way additive interaction equals zero assumption: 3-way interaction term in linear model equals zero; K-way multiplicative interaction equals zero assumption: 3-way interaction term in log-linear model equals zero.
}
\label{all_violated}
\end{table}

\begin{table}[!ht]
\centering
\resizebox{0.4 \textwidth}{!}{%
\begin{tabular}{llll}
\hline
Estimator & Df & AIC & BIC  \\ \hline
$\Psi_I(P_{M_0})$ & 5 & 743.631 & 753.447 \\
$\Psi_I(P_{M_t})$ & 3 & 391.757 & 411.388  \\ 
\hline
\end{tabular}%
}
\caption{model fit statistics for$\Psi_I(P_{M_0})$ and $\Psi_I(P_{M_t})$ estimators.}
\label{MTable4b}
\end{table}
\end{landscape}
\clearpage

\section{Data Analysis}\label{data}
% new
Schistosomiasis is an acute and chronic parasitic disease endemic to 78 countries worldwide. Estimates show that at least 229 million people required preventive treatment in 2018 \cite{who}. In China, schistosomiasis is an ongoing public health challenge and the country has set ambitious goals of achieving nationwide transmission interruption \cite{liang2014surveillance}. To monitor the transmission of schistosomiasis, China operates three surveillance systems which can be considered as repeated samples of the underlying infected population. The first is a national surveillance system designed to monitor nationwide schistosomiasis prevalence ($S_1$), which covers 1\% of communities in endemic provinces, conducting surveys every 6-9 years. The second is a sentinel system designed to provide longitudinal measures of disease prevalence and intensity in select communities ($S_2$), and conducts yearly surveys. The third system ($S_3$) comprises routine surveillance of all communities in endemic counties, with surveys conducted on a roughly three-year cycle \cite{liang2014surveillance}. In this application, we use case records for schistosomiasis from these three systems for a region in southwestern China \cite{spear2004factors}. The community, having a population of about 6000, reported a total of 302 cases in 2004: 112 in $S_1$; 294 in $S_2$; and 167 in $S_3$; with 202 cases appearing on more than one system register (figure \ref{schisto}). The three surveillance systems are not independent, and we apply estimators under the four constraints (highest-way interaction equals zero in log-linear model, highest-way interaction equals zero in linear model, independence, and conditional independence)to the data.

\begin{figure}[!ht]
\centering
    \includegraphics[width= 0.6 \linewidth]{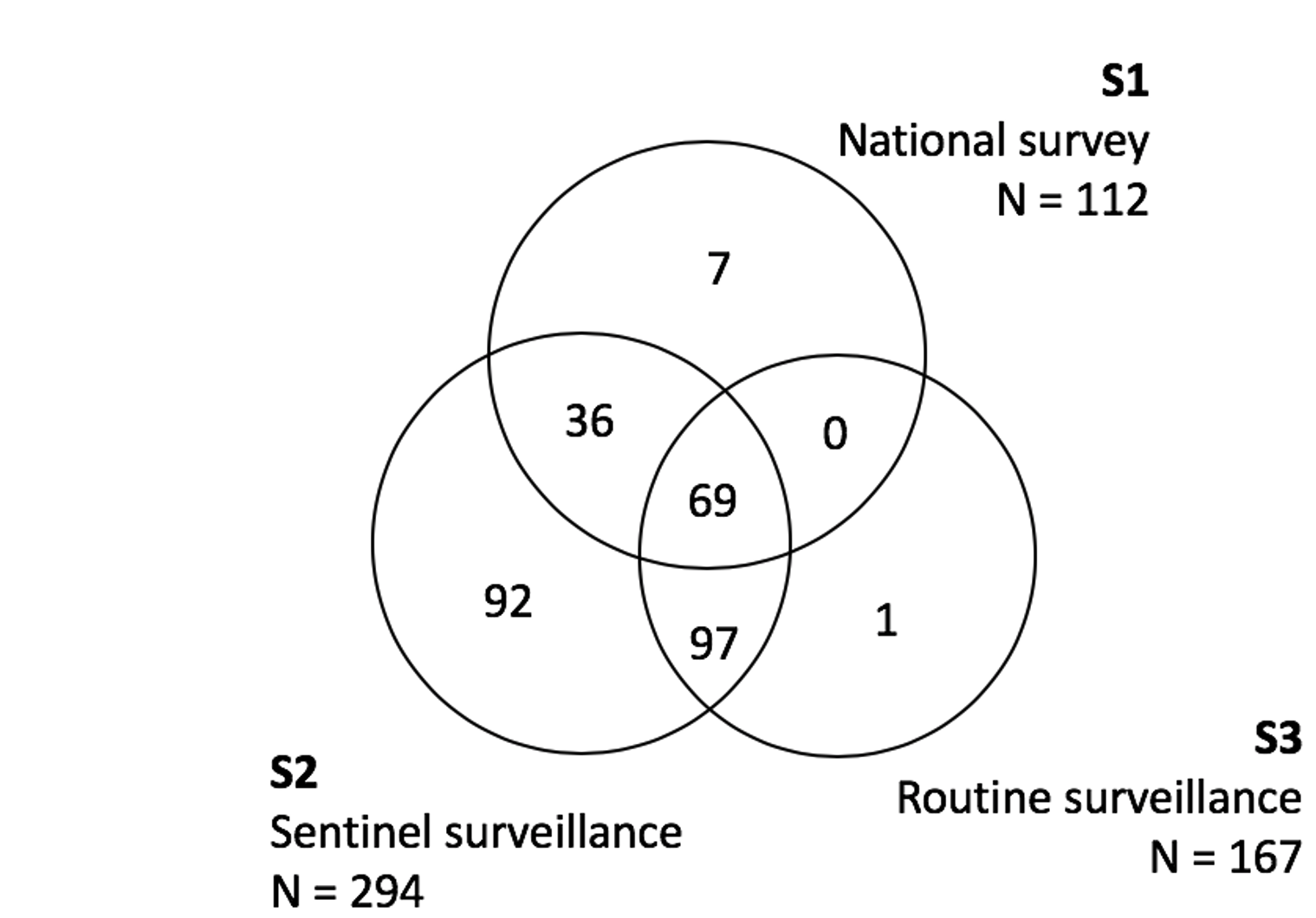}
    \caption{Schistosomiasis case frequencies among $S_1, S_2, S_3$ surveillance systems in a community (population $\sim 6000$) in southwestern China in 2004 \cite{spear2004factors}}
    \label{schisto}
\end{figure}

The estimated $\hat \psi$ with its 95\% asymptotic confidence intervals are shown in table \ref{schisto_table}. The highest estimate of $\psi$ is 0.9969 (with 95\% CI: (0.8041, 0.9829)) under conditional independence assumption, and the lowest is 0.8935 (with 95\% CI: (0.9965, 0.9972)) under the linear model with no highest-way interaction assumption (table \ref{schisto_table}). This analysis illustrates the heavy dependence of estimates of $\Psi$ on the arbitrary identification assumptions using real data.

One way to solve this problem is to hold certain identification assumptions true by design. In this case, we could make an effort to design the sentinel system $S_2$ and routine system $S_3$ to be independent of each other conditional on national system $S_1$. For example, let the sampling surveys in $S_2$ and $S_3$ be done independently, while each of them could borrow information from $S_1$. In so doing we can assume the conditional independence between the two samples and conduct the analysis correspondingly.

\begin{landscape}
\begin{table}[!ht]
\centering
\resizebox{1.2\textwidth}{!}{%
\begin{tabular}{lllll}
\hline
Assumption & Estimator  & Estimated $\psi$& Lower 95\% CI  & Upper 95\% CI\\ \hline
Highest-way interaction equals zero (log-linear) & $\Psi_I(P_{lasso})$ & 0.9212 & 0.8755 & 0.9669\\
Highest-way interaction equals zero (log-linear)& $\Psi_I(P_{tmle})$ & 0.9428 & 0.9089 & 0.9768\\
Highest-way interaction equals zero (log-linear)& $\Psi_I(P_{M_0})$ & 0.9321 & 0.8959 & 0.9627\\
Highest-way interaction equals zero (log-linear)& $\Psi_I(P_{M_t})$ & 0.9934 & 0.9748 & 1\\
Highest-way interaction equals zero (linear)& $\Psi_f(P_n)$ & 0.8935 & 0.8041 & 0.9829\\
Independence & $\Psi_{II}(P_n)$ & 0.963 & 0.9326 & 0.9935\\
Conditional independence &  $\Psi_{CI}(P_n)$ & 0.9969 & 0.9965 & 0.9972\\\hline
\end{tabular}%
}
\caption{Estimated $\hat \psi$ and 95\% asymptotic confidence interval based on normal approximation with $\hat \sigma^2$ estimated from efficient influence curve and its coverage for each estimator. Conditional independence assumption: survey $S_1$ and $S_2$ are independent conditional on $S_3$; Independence assumption: survey $S_1$ and $S_2$ are independent; K-way additive interaction equals zero assumption: 3-way interaction term in linear model equals zero; K-way multiplicative interaction equals zero assumption: 3-way interaction term in log-linear model equals zero. The $\Psi_I(P_{NP})$ estimator under K-way multiplicative interaction term equals zero assumption is not defined due to existing empty cells in observed data.}
\label{schisto_table}
\end{table}

\begin{table}[!ht]
\centering
\resizebox{0.4 \textwidth}{!}{%
\begin{tabular}{llll}
\hline
Estimator & Df & AIC & BIC  \\ \hline
$\Psi_I(P_{M_0})$ & 5 & 329.383 & 336.804 \\
$\Psi_I(P_{M_t})$ & 3 & 60.139 & 74.980  \\ \hline
\end{tabular}%
}
\caption{model fit statistics for$\Psi_I(P_{M_0})$ and $\Psi_I(P_{M_t})$ estimators.}
\label{MTable5}
\end{table}
\end{landscape}
\clearpage

\section{Discussion}\label{conclusions}

\subsection{Summary table}
In this section we summarise all the estimators we proposed under linear and non-linear constraints as below.

\begin{table}[!ht]
\resizebox{\textwidth}{!}{%
\begin{tabular}{@{}ll@{}}
\toprule
Identification   assumption & Highest-way interaction term in linear model equals zero             \\ \midrule
Constraint                  & $E_{P_{B^*}} f(B^*) =\sum_b (-1)^{K+\sum_{k=1}^K b_k} P_{B^*,0}(b)= 0$\\\midrule
Method                      & Plug-in                                                                \\\midrule
Target parameter            & $\psi_f = \frac{f(0)}{f(0) - \sum_{b \ne 0} f(b) P(b)}$                \\\midrule
Efficient influence curve & $D^*_f = \frac{f(0)}{(f(0) - \sum_{b \ne 0} f(b) P(b))^2} [f(B) - \sum_{b   \ne 0} f(b) P(b)]$ \\ \bottomrule
\end{tabular}
}
\caption{Summary table for identification assumption: highest-way interaction term in linear model equals zero, where $f(b)=(-1)^{K+\sum_{k=1}^K b_k}$.}
\end{table}

\begin{table}[!ht]
\resizebox{\textwidth}{!}{%
\begin{tabular}{@{}ll@{}}
\toprule
Identification   assumption & Independence between two samples \\ \midrule
Constraint                  & $\Phi_{II}(P^*)\equiv \sum_b   I(b(1:2)=(0,0))P^*(b) -$\\ 
 & $\sum_{b_1,b_2}I(b_1(1)=b_2(2)=0)P^*(b_1)P^*(b_2)$   

\\\midrule
Method                      & Plug-in         \\\midrule
Target parameter            & $\Psi_{II}(P)=\frac{   1-P(B(1)=0)-P(B(2)=0)+P(B(1:2)=(0,0))}{1-P(B(1)=0)-P(B(2)=0)+P(B(1)=0)P(B(2)=0)}$ \\\midrule
Efficient influence curve &
$D^*_{\Phi_{II}}(P) = C_2(P)\{\mathbbm{1}(B(1)=0)-P(B(1)=0)\} +$\\ 
 & $C_3(P)\{\mathbbm{1}(B(2)=0)-P(B(2)=0) +$\\ 
 & $C_4(P)\{\mathbbm{1}(B(1)=B(2)=0)-P(B(1:2)=0\}$,\\
 & where 
$C_2(P) = \frac{1-P(B(1)=0)-P(B(2)=0)-P(B(1)=B(2)=0)}{(1-P(B(1)=0)-P(B(2)=0)-P(B(1)=0)P(B(2)=0))^2} P(B(2)=0)$,\\
 & $C_3(P) = \frac{1-P(B(1)=0)-P(B(2)=0)-P(B(1)=B(2)=0)}{(1-P(B(1)=0)-P(B(2)=0)-P(B(1)=0)P(B(2)=0))^2} P(B(1)=0)$,\\
 & $C_4(P) = -\frac{1}{1-P(B(1)=0)-P(B(2)=0)-P(B(1)=0)P(B(2)=0)}$
\\ \bottomrule
\end{tabular}}
\caption{Summary table for identification assumption: independence between two samples.}
\end{table}

\begin{table}[!ht]
\resizebox{\textwidth}{!}{%
\begin{tabular}{@{}ll@{}}
\toprule
Identification   assumption & Conditional independence between two   samples \\ \midrule
Constraint & $\Phi_{CI,(j,m)} =P^*(B_j =1   \vert B_1 = 0, \cdots, B_m = 0, \cdots, B_K = 0)$ \\
 & $- P^*(B_j =1 \vert B_1 = 0,   \cdots, B_m = 1, \cdots, B_K = 0)$ \\ \midrule
Method & Plug-in \\ \midrule
Target parameter & $\Psi_{CI} = \frac{P(B_m = 1, B_j = 1, 0,...,0)}{P(B_m = 1, B_j = 1,  0,...,0) + P(B_m = 0, B_j = 1,  0,...,0)P(B_m = 1, B_j = 0,  0,...,0)}$ \\ \midrule
Efficient influence curve & $D^*_{\Phi_{CI}}(P) = \frac{1}{C_5(P)}(C_6(P) - C_7(P) - C_8(P))$,\\
 & where $C_5(P) = -\sum_{b_1\not =0,b_2\not =0}[\mathbbm{I}(b_1(1)=b_2(2)=0)P(b_1)P(b_2)$\\
& $C_6(P) = P(B_m = 0, B_j = 1, 0,...,0)P(B_m = 0, B_j = 0, 0,...,0)$\\
& $[\mathbbm{I}(B_m = 1, B_j = 1, 0,...,0) - P(B_m = 1, B_j = 1, 0,...,0)]$\\
& $C_7(P) = P(B_m = 1, B_j = 0, 0,...,0)P(B_m = 1, B_j = 1, 0,...,0)$\\
& $[\mathbbm{I}(B_m = 0, B_j = 1, 0,...,0) - P(B_m = 0, B_j = 1, 0,...,0)]$\\
& $C_8(P) = P(B_m = 0, B_j = 1, 0,...,0)P(B_m = 1, B_j = 1, 0,...,0)$\\
& $[\mathbbm{I}(B_m = 1, B_j = 0, 0,...,0) - P(B_m = 1, B_j = 0, 0,...,0)]$\\ 

\bottomrule
\end{tabular}}
\caption{Summary table for identification assumption: conditional independence between two samples.}
\end{table}

\begin{table}[!ht]
\resizebox{\textwidth}{!}{%
\begin{tabular}{@{}ll@{}}
\toprule
Identification   assumption & Highest-way interaction term in log-linear model equals zero           \\ \midrule
Constraint                  & $\Phi_I(P^*)\equiv \sum_b (-1)^{1+\sum_{k=1}^K b_k} \log P_{B^*}(b) = 0$ \\ \midrule
Method                      & Plug-in (NPMLE), undersmoothed lasso, TMLE based on lasso               \\ \midrule
Target parameter          & $\Psi_I(P)=\frac{1}{1+\exp( (-1)^{K+1}\sum_{b\not =0}f(b)\log P(b))}$      \\ \midrule
Efficient influence curve & $D^*_{\Phi_I}(P) = (-1)^K \Psi_I(P)(1-\Psi_I(P)) \left\{\frac{f(B)}{P(B)}+f(0) \right\}$ \\ \bottomrule
\end{tabular}%
}
\caption{Summary table for identification assumption: highest-way interaction term in log-linear model equals zero, where $f(b)=(-1)^{K+\sum_{k=1}^K b_k}$.}
\end{table}

\clearpage

We developed a modern method to estimate population size based on capture-recapture designs with a minimal number of constraints or parametric assumptions. We provide the solutions, theoretical support, simulation study and sensitivity analysis for four identification assumptions: independence between two samples, conditional independence between two samples, no highest-way interaction in linear models, and no highest-way  interaction in log-linear models. We also developed machine learning algorithms to solve the curse of dimensionality for high dimensional problems under the assumption of no highest-way interaction in log-linear model. Through our analysis, we found that whether the identification assumption holds true plays a vital role in the performance of estimation. When the assumption is violated, all estimators will be biased. This conclusion applies to models of all forms, parametric or non-parametric, simple plug-in estimators or complex machine-learning based estimators. Thus one should always ensure that the chosen identification assumption is known to be true by survey design, otherwise all the estimators will be unreliable. Under the circumstances where the identification assumptions hold true, the performance of our targeted maximum likelihood estimator, $\Psi_I(P_{tmle})$, is superior to $\Psi_I(P_{M_0})$(identical capture-probabilities, no highest-way interaction in log-linear model), $\Psi_I(P_{M_t})$(no interaction terms in log-linear model) and $\Psi_I(P_{NP})$(plugged-in, no highest-way interaction in log-linear model) estimators in several aspects: first, by making the least number of assumptions required for identifiability, the estimator $\Psi_I(P_{tmle})$ is more robust in empirical data analysis, as there will be no bias due to violations of parametric model assumptions. Second, the estimator $\Psi_I(P_{tmle})$ is based on a consistent undersmoothed lasso estimator. This property ensures the asymptotic efficiency of TMLE. Third, when there are empty cells, the estimator $\Psi_I(P_{tmle})$ solves the curse of dimensionality by correcting the bias introduced by the undersmoothed lasso estimator, and gives a more honest asymptotic confidence interval, wider that those from parametric models, and hence a higher coverage.
\newpage

\printbibliography

\newpage

\section{Appendix}\label{appendix}
In the appendix, we formally state the lemmas used in the context and provide the proofs. In section \ref{apd_indep_psi}, we derive the target parameter $\Psi_{II}(P)$ under identification assumption that the first two samples $B_1, B_2$ are independent, given there are three samples in total. In section \ref{apd_indep_d} we derive the efficient influence curve $D_{\Phi_{II}}^*(P)$ for $\Psi_{II}(P)$. In section \ref{apd_lemma}, we state the lemma on how to derive the efficient influence curve under multidimensional constraint $\Phi_{III}$. In section \ref{apd_condInf_psi}, we formally state the target parameter $\Psi_{CI}(P)$ under identification assumption that the first two samples $B_1, B_2$ are independent conditional on the third samples. In section \ref{apd_condInf_d} we derive the efficient influence curve $D_{\Phi_{CI}}^*(P)$ for $\Psi_{CI}(P)$. In section \ref{apd_efficincy_TMLE} we prove the asymptotic efficiency of the TMLE.

\subsection{Target parameter under independence assumption}\label{apd_indep_psi}
\begin{lemma}
For constraint $\Phi_{II}$(equation \ref{PhiII}), we have the target parameter $\Psi_{II}$ as:
\begin{eqnarray*}
\Psi_{II}(P) = \frac{1 - P(B(1) = 0) - P(B(2) = 0) + P(B(1:2) = (0,0))}{1 - P(B(1) = 0) - P(B(2) = 0) + P(B(1) = 0)P(B(2) = 0)}
\end{eqnarray*}\label{lm_indep_psi}
\end{lemma}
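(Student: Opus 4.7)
The plan is to substitute the standard reparameterization $P^*(b) = \psi P(b)$ for $b \neq 0$ and $P^*(0) = 1-\psi$, where $\psi = \Psi^F(P^*) \in (0,1)$, directly into the constraint $\Phi_{II}(P^*) = 0$ from equation (\ref{PhiII}), and then solve the resulting equation for $\psi$ in terms of marginals of $P$. This mirrors the general strategy laid out in section \ref{linear_estimand} for turning a one-dimensional $\Phi$-constraint into an equation $H(\psi, P) = 0$.

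First I would expand the linear piece $\sum_b I(b(1{:}2) = (0,0)) P^*(b)$ by separating $b = 0$ from $b \neq 0$: the $b=0$ term contributes $1-\psi$, while the $b \neq 0$ terms contribute $\psi\, P(B(1{:}2) = (0,0))$, since $P$ has no mass at $0$. Next, for the bilinear piece $\sum_{b_1,b_2} I(b_1(1) = 0) I(b_2(2) = 0)\, P^*(b_1) P^*(b_2)$, I would split each index into the zero and nonzero cases, yielding a $(1-\psi)^2$ contribution, two cross terms $\psi(1-\psi) P(B(2)=0)$ and $\psi(1-\psi) P(B(1)=0)$, and a $\psi^2 P(B(1)=0)P(B(2)=0)$ term.

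After substitution, $\Phi_{II}(P^*_{P,\psi}) = 0$ becomes a polynomial in $\psi$. A quick simplification shows the constant term vanishes, since $(1-\psi) - (1-\psi)^2 = \psi(1-\psi)$, so one can divide through by $\psi > 0$, reducing the equation to one that is linear in $\psi$. Solving and grouping terms produces exactly the stated expression
\[
\psi \;=\; \frac{1 - P(B(1)=0) - P(B(2)=0) + P(B(1{:}2)=(0,0))}{1 - P(B(1)=0) - P(B(2)=0) + P(B(1)=0)P(B(2)=0)},
\]
which is equivalent to the ratio $-b_{II}(P)/a_{II}(P)$ recorded in the main text once one identifies each sum with its corresponding marginal of $P$.

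The main obstacle is clean bookkeeping: one must sum over all $b \in \{0,1\}^K$ (including $b=0$) in the definition of $\Phi_{II}$, correctly evaluate each indicator at $b=0$, and carry out the four-case split of the bilinear sum exhaustively. A minor secondary point is to verify that the denominator is nonzero, but since it factors as $(1-P(B(1)=0))(1-P(B(2)=0)) = P(B(1)=1) P(B(2)=1)$, this follows from the positivity assumptions on the bivariate marginal of $(B(1),B(2))$ noted after equation (\ref{PsiII}), which also ensures $\psi \in (0,1)$.
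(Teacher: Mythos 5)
Your proposal is correct and follows essentially the same route as the paper: substitute $P^*(b)=\psi P(b)$ for $b\neq 0$ and $P^*(0)=1-\psi$ into $\Phi_{II}(P^*)=0$, expand to obtain a quadratic in $\psi$ with vanishing constant term ($a_{II}(P)\psi^2+b_{II}(P)\psi=0$), divide by $\psi>0$, and solve the resulting linear equation. The only difference is cosmetic — you work with general $K$ via marginal indicators while the appendix proof writes out the $K=3$ cells explicitly — and your expansion of the linear and bilinear pieces, the identification of the coefficients, and the positivity remark on the denominator all check out.
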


\begin{proof}
We provide a brief proof of lemma \ref{lm_indep_psi} when there are three samples. In this case, $\Phi_{II}(P^*) = 0 $ is equivalent to 
\begin{equation}
P^*(B^*(1:2) = (0,0)) = P^*(B(1) = 0)P^*(B^*(2) = 0).\label{phiII_eq1} 
\end{equation}
When there are three samples, equation \ref{phiII_eq1} can be expanded as:
\begin{eqnarray}
P^*(0,0,1) + P^*(0,0,0) &=& P^*(0,0,0)^2 + [P^*(0,1,0) + P^*(0,1,1)+ P^*(0,0,1) \nonumber\\ 
&& + P^*(1,0,0) + P^*(1,0,1) + P^*(0,0,1)]\times P^*(0,0,0) \nonumber\\
&& + [P^*(0,1,0) + P^*(0,1,1) + P^*(0,0,1)] \nonumber\\
&& \times [P^*(1,0,0) + P^*(1,0,1) + P^*(0,0,1)]\label{eq1}
\end{eqnarray}
Denote $a^* = P^*(0,1,0) + P^*(0,1,1) + P^*(0,0,1)$, $b^* = P^*(1,0,0) + P^*(1,0,1) + P^*(0,0,1)$. Equation \ref{eq1} can be written as:
\begin{eqnarray*}
&0 = P^*(0,0,0)^2 + (a^* + b^* -1)\times P^*(0,0,0) - P^*(0,0,1).
\end{eqnarray*}
Plug in $\psi = 1 - P^*(0,0,0)$, we have
\[P(0,1,0) = \frac{P^*(0,1,0)}{\psi}, ..., P(0,0,1) = \frac{P^*(0,0,1)}{\psi}, a = \frac{a^*}{\psi}, b = \frac{b^*}{\psi}.\]
Thus equation \ref{eq1} can be expressed as:
\begin{equation} \label{eq2}
0 = (1 + ab - a - b)\psi^2 + (a + b - 1 - P(0,0,1))\psi.
\end{equation}
Here, let $a_{II}=1 + ab - a - b$, $b_{II}=a + b - 1 - P(0,0,1)$, from equation \ref{eq2} we know $a_{II}\psi + b_{II} = 0$, thus we have
\begin{eqnarray*}
\psi &=& -\frac{b_{II}}{a_{II}}\\
&=& \frac{1 - P(B(1) = 0) - P(B(2) = 0) + P(B(1:2) = (0,0))}{1 + P(B(1) = 0)P(B(2)= 0) - P(B(1) = 0) - P(B(2) = 0)}
\end{eqnarray*}
where 
$P(B(1) = 0) = P(0,1,0) + P(0,1,1) + P(0,0,1), P(B(2) = 0) = P(1,0,0) + P(1,0,1) + P(0,0,1),$ and $P(B(1:2) = (0,0)) = P(0,0,1)$
\end{proof}

\subsection{Efficient influence curve under independence assumption}\label{apd_indep_d}
\begin{lemma}
For constraint $\Phi_{II} = 0$(equation \ref{PhiII}), we have the efficient influence curve $D^*_{\Phi_{II}}(P)$ as:
\begin{eqnarray}
D^*_{\Phi_{II}}(P) &=& \frac{\partial \psi}{\partial P(B(1) = 0)}\times D^*_{\Phi_{II}}(P(B(1) = 0)) \nonumber \\
&&+ \frac{\partial \psi}{\partial P(B(2) = 0)}\times D^*_{\Phi_{II}}(P(B(2) = 0)) \nonumber \\
&&+ \frac{\partial \psi}{\partial P(B(1:2) = (0,0))}\times D^*_{\Phi_{II}}(P(B(1:2) = (0,0))).\label{lm2}
\end{eqnarray}
\end{lemma}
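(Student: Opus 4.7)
The plan is to prove this lemma via the functional delta method applied to the explicit representation of $\Psi_{II}$ derived in Lemma \ref{lm_indep_psi}. The key observation is that $\Psi_{II}(P)$ depends on $P$ only through three one-dimensional expectations: $p_1 := P(B(1)=0)$, $p_2 := P(B(2)=0)$, and $p_{12} := P(B(1:2)=(0,0))$. Setting $g(p_1,p_2,p_{12}) = (1 - p_1 - p_2 + p_{12})/(1 - p_1 - p_2 + p_1 p_2)$, Lemma \ref{lm_indep_psi} gives $\Psi_{II}(P) = g(p_1(P),p_2(P),p_{12}(P))$, and $g$ is smooth on the region carved out by the identifiability condition $0 < p_1 < 1$, $0 < p_2 < 1$ discussed in Section \ref{indep_define}.

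Next I would invoke the fact, established earlier in the excerpt, that the observed-data model ${\cal M}_{\Phi_{II}}$ is saturated, since a single scalar constraint on $P^*$ still leaves $\mathcal{M}$ nonparametric on $\{b : b\neq 0\}$. Hence the tangent space at any $P\in {\cal M}_{\Phi_{II}}$ is all of $L^2_0(P)$. For any mean functional $E_P h(B)$ in a saturated model, the canonical gradient is the centered statistic $h(B) - E_P h(B)$; applied to $h = I(B(1)=0)$, $I(B(2)=0)$, and $I(B(1:2)=(0,0))$, this produces the three componentwise influence curves $D^*_{\Phi_{II}}(p_j)(P)$ on the right-hand side of the claim.

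The core step is the chain rule for pathwise-differentiable parameters. For any one-dimensional submodel $\{P_\epsilon\}$ with score $s \in L^2_0(P)$,
\[
\left.\frac{d}{d\epsilon}\Psi_{II}(P_\epsilon)\right|_{\epsilon=0} = \sum_{j \in \{1,2,12\}} \frac{\partial g}{\partial p_j} \left.\frac{d}{d\epsilon} p_j(P_\epsilon)\right|_{\epsilon=0} = \left\langle \sum_{j} \frac{\partial g}{\partial p_j}\, D^*_{\Phi_{II}}(p_j)(P),\, s\right\rangle_{L^2(P)}.
\]
The candidate $\sum_j (\partial g/\partial p_j)\, D^*_{\Phi_{II}}(p_j)(P)$ is a linear combination of centered indicators, hence it already lies in $L^2_0(P)$, which equals the tangent space. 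By uniqueness of the canonical gradient in a nonparametric model, this sum is the efficient influence curve, giving the decomposition stated in the lemma.

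The main obstacle is really just bookkeeping: one must verify that the three partial derivatives $\partial g/\partial p_1$, $\partial g/\partial p_2$, $\partial g/\partial p_{12}$ agree with the explicit coefficients $C_2(P), C_3(P), C_4(P)$ displayed earlier in equation (\ref{efficphiII}). Differentiating the quotient $g$ and simplifying using the factorization $1 - p_1 - p_2 + p_1 p_2 = (1-p_1)(1-p_2)$ should reproduce those expressions directly, closing the loop between the abstract delta-method form in the lemma and the concrete formula used in practice.
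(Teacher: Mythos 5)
Your proposal is correct and follows essentially the same route as the paper's own proof: apply the delta method to $\Psi_{II}$ viewed as a smooth function of the three component probabilities $P(B(1)=0)$, $P(B(2)=0)$, $P(B(1:2)=(0,0))$, take the centered indicators as the components' influence curves, and identify the coefficients with the partial derivatives. The only addition is your explicit justification (saturated model, tangent space equal to $L^2_0(P)$, uniqueness of the canonical gradient) for why the delta-method gradient is in fact the \emph{efficient} influence curve, which the paper leaves implicit.
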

\begin{proof}
By the delta method \cite{doob1935limiting}, the efficient influence curve of $\psi$ can be written as a function of each components' influence curve. The efficient influence curves of the three components are presented as follows: \\
\begin{eqnarray*}
D^*_{\Phi_{II}}(P(B(1) = 0)) &=& \mathbbm{I}(B(1) = 0) - P(B(1) = 0).\\
D^*_{\Phi_{II}}(P(B(2) = 0)) &=& \mathbbm{I}(B(2) = 0) - P(B(2) = 0).\\
D^*_{\Phi_{II}}(P(B(1:2) = (0,0))) &=& \mathbbm{I}(B(1:2) = (0,0)) - P(B(1:2) = (0,0)).
\end{eqnarray*}
Therefore, we only need to calculate the derivatives to get $D^*_{\Phi_{II}}(P)$, and the three parts of derivatives are given by
\begin{equation*}
\frac{\partial \psi}{\partial P(B(1) = 0)} = \frac{(1 - P(B(2) = 0))(P(B(1:2) = (0,0)) - P(B(1) = 0))}{(1 - P(B(1) = 0) - P(B(2) = 0) + P(B(1) = 0)P(B(2) = 0))^2}.
\end{equation*}

\begin{equation*}
\frac{\partial \psi}{\partial P(B(2) = 0)} = \frac{(1 - P(B(1) = 0))(P(B(1:2) = (0,0)) - P(B(1) = 0))}{(1 - P(B(1) = 0) - P(B(2) = 0) + P(B(1) = 0)P(B(2) = 0))^2}.
\end{equation*}
\begin{equation*}
\frac{\partial \psi}{\partial P(B(1:2) = (0,0))} = \frac{1}{1 - P(B(1) = 0) - P(B(2) = 0) + P(B(1) = 0)P(B(2) = 0)}.
\end{equation*}
Plug each part into equation \ref{lm2}, we have
\begin{eqnarray*}
D^*_{\Phi_{II}}(P)
&=& \frac{(1 - P(B(2) = 0))(P(B(1:2) = (0,0)) - P(B(2) = 0))}{(1 - P(B(1) = 0) - P(B(2) = 0) + P(B(1) = 0)P(B(2) = 0))^2}\\
&& [\mathbbm{I}(B(1) = 0) - P(B(1) = 0)]\\
&&+ \frac{(1 - P(B(1) = 0))(P(B(1:2) = (0,0)) - P(B(1) = 0))}{(1 - P(B(1) = 0) - P(B(2) = 0) + P(B(1) = 0)P(B(2) = 0))^2}\\
&& [\mathbbm{I}(B(2) = 0) - P(B(2) = 0)]\\
&&+ \frac{1}{1 - P(B(1) = 0) - P(B(2) = 0) + P(B(1) = 0)P(B(2) = 0)}\\
&& [\mathbbm{I}(B(1:2) = (0,0)) - P(B(1:2) = (0,0))].
\end{eqnarray*}
\end{proof}

\subsection{Efficient influence curve under multidimensional constraint}\label{apd_lemma}
\begin{lemma}
Consider a model ${\cal M}\equiv \{P\in {\cal M}_1:\Phi(P)=0\}$ defined by an initial larger model ${\cal M}_1$ and multivariate  constraint function $\Phi:{\cal M}_1\rightarrow\openr^K$.
Suppose that $\Phi:{\cal M}_1\rightarrow\openr^K$ is path-wise differentiable at $P$ with efficient influence curve $D^*_{\Phi}(P)$ for all $P\in {\cal M}_1$.
Let $T_1(P)$ be the tangent space at $P$ for model ${\cal M}_1$, and let $\Pi_{T_1}:L^2_0(P)\rightarrow T_1(P)$ be the projection operator onto $T_1(P)$.
The tangent space at $P$ for model ${\cal M}$ is given by:
\[
T(P)=\{S\in T_1(P): S\perp D^*_{\Phi}(P)\}.\]
The projection onto $T(P)$ is given by:
\[
\Pi_{T}(S)=\Pi_T(S)-\Pi_{D^*_{\Phi}}(\Pi_T(S)),\]
where $\Pi_{D^*_{\Phi}}$ is the projection operator on the $K$-dimensional subspace of $T_1(P)$ spanned by the components of $D^*_{\Phi}(P)$. The latter projection is given by the formula:
\[
\Pi_{D^*_{\Phi}}(S)= E(S D^*_{\Phi}(P)^{\top}) E (D^*_{\Phi}(P)D^*_{\Phi}(P)^{\top})^{-1}D^*_{\Phi}(P).\]
\end{lemma}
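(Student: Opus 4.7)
My plan is to first identify $T(P)$ by differentiating the constraint $\Phi(P) = 0$ along regular paths in $\mathcal{M}_1$, then decompose the larger tangent space $T_1(P)$ into $T(P)$ and the finite-dimensional subspace spanned by the components of $D^*_\Phi(P)$, and finally read off the projection formula by standard linear algebra in $L^2_0(P)$.

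First, I would characterize $T(P)$. For any one-parameter submodel $\{P_\epsilon : \epsilon\} \subset \mathcal{M}$ with $P_0 = P$ and score $S \in L^2_0(P)$, the constraint $\Phi(P_\epsilon) = 0$ for all small $\epsilon$ implies
\[
0 = \left.\frac{d}{d\epsilon}\Phi(P_\epsilon)\right|_{\epsilon=0} = E_P\bigl[D^*_\Phi(P)\, S\bigr],
\]
componentwise, by pathwise differentiability of $\Phi:\mathcal{M}_1 \to \openr^K$ at $P$. Since the submodel also lies in $\mathcal{M}_1$, one has $S \in T_1(P)$. Conversely, any $S \in T_1(P)$ with $S \perp D^*_\Phi(P)$ can be realized as a score of a submodel in $\mathcal{M}_1$, and the first-order condition shows that $\Phi(P_\epsilon) = o(\epsilon)$; a standard perturbation of the submodel (adjusting by an $o(\epsilon)$ element in $T_1(P)$) yields an actual path in $\mathcal{M}$ with the same score, establishing $T(P) = \{S \in T_1(P) : S \perp D^*_\Phi(P)\}$.

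Second, I would verify that each component $D^*_{\Phi,j}(P) \in T_1(P)$, since the canonical gradient of a pathwise-differentiable parameter on $\mathcal{M}_1$ is the projection of any gradient onto $T_1(P)$ and hence lies in $T_1(P)$. Writing $V(P) := \mathrm{span}\{D^*_{\Phi,1}(P), \ldots, D^*_{\Phi,K}(P)\} \subset T_1(P)$, the tangent space of the smaller model is the orthogonal complement of $V(P)$ within $T_1(P)$, giving the Hilbert-space decomposition $T_1(P) = T(P) \oplus V(P)$. For any $S \in L^2_0(P)$, projecting first onto $T_1(P)$ gives $\Pi_{T_1}(S)$, and then subtracting its $V(P)$-component yields the projection onto $T(P)$: this is the content of the stated formula (with the typo $\Pi_T(S)$ on the right-hand side read as $\Pi_{T_1}(S)$).

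Third, for the explicit formula for $\Pi_{V(P)}$, I would write any element of $V(P)$ as $c^\top D^*_\Phi(P)$ with $c \in \openr^K$ and impose $E_P\bigl[(S - c^\top D^*_\Phi(P)) D^*_\Phi(P)^\top\bigr] = 0$. Solving gives $c^\top = E_P[S\, D^*_\Phi(P)^\top]\, E_P[D^*_\Phi(P) D^*_\Phi(P)^\top]^{-1}$, hence
\[
\Pi_{V(P)}(S) = E_P\!\bigl[S\, D^*_\Phi(P)^\top\bigr]\, E_P\!\bigl[D^*_\Phi(P) D^*_\Phi(P)^\top\bigr]^{-1} D^*_\Phi(P),
\]
as stated. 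The main obstacle is making the tangent-space characterization rigorous: the forward direction is an immediate consequence of pathwise differentiability, but the converse (every $S \in T_1(P)$ with $S \perp D^*_\Phi(P)$ actually arises as a score of a submodel within $\mathcal{M}$) requires a perturbation argument to correct the $o(\epsilon)$ deviation of $\Phi(P_\epsilon)$ from zero, which is where nondegeneracy of $E_P[D^*_\Phi D^{*\top}_\Phi]$ (i.e., no redundant constraints) enters and guarantees invertibility in the projection formula.
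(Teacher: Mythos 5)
The paper states this lemma without providing any proof, so there is no argument of the authors' to compare yours against; your proposal has to stand on its own, and it does. Your three-step argument is the standard semiparametric one: (i) differentiating $\Phi(P_\epsilon)=0$ along submodels of ${\cal M}$ and invoking pathwise differentiability gives $E_P[D^*_{\Phi}(P)S]=0$, so $T(P)\subseteq\{S\in T_1(P):S\perp D^*_{\Phi}(P)\}$; (ii) since the efficient influence curve is by definition the gradient lying in $T_1(P)$, its components span a finite-dimensional subspace $V(P)\subseteq T_1(P)$ and $T_1(P)=T(P)\oplus V(P)$ orthogonally; (iii) the explicit projection onto $V(P)$ follows from the normal equations. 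You also correctly read the displayed formula's $\Pi_T(S)$ on the right-hand side as a typo for $\Pi_{T_1}(S)$ — without that correction the identity would force $\Pi_{D^*_{\Phi}}(\Pi_T(S))=0$ trivially and say nothing. The one genuinely delicate step is the reverse inclusion in (i): that every $S\in T_1(P)$ orthogonal to $D^*_{\Phi}(P)$ is actually attained as a score of a path staying inside ${\cal M}$, not merely a path along which $\Phi$ vanishes to first order. You flag this and correctly identify that it needs a perturbation/implicit-function-type correction and nondegeneracy of $E_P[D^*_{\Phi}D^{*\top}_{\Phi}]$ (which is also what makes the inverse in the projection formula exist); spelling out that correction explicitly would be the only thing required to make the proof fully rigorous. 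In the finite discrete setting of this paper ($B\in\{0,1\}^K$, so all spaces are finite-dimensional) that step is elementary, so your outline is adequate.
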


\subsection{Target parameter under conditional independence assumption}\label{apd_condInf_psi}
\begin{lemma}
For constraint $\Phi_{CI} = 0$ (equation \ref{cond_indep_constraint}), we have the target parameter $\Psi_{CI}$ as
\begin{equation*}
\Psi_{CI} = \frac{P(B_m = 1, B_j = 1, 0,...,0)}{C_0(P)}.
\end{equation*}
where 
\begin{eqnarray*}
C_0(P)&=& P(B_m = 1, B_j = 1,  0,...,0)\\
&& + P(B_m = 0, B_j = 1,  0,...,0)P(B_m = 1, B_j = 0,  0,...,0).
\end{eqnarray*}
\end{lemma}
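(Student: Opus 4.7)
The plan is to convert the conditional independence constraint into a symmetric bilinear relation among four specific cell probabilities of $P^*$, apply the identifiability reparameterization $P^*(b) = \psi^F P(b)$ for $b\neq 0$ with $P^*(0) = 1-\psi^F$, and then solve the resulting scalar equation for $\psi^F$. This mirrors the derivations of $\Psi_f$ and $\Psi_{II}$ already carried out in section \ref{indep_define}.

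First I would fix notation: for $s,t\in\{0,1\}$ let $p^*_{st} \equiv P^*(B_j=s,\ B_m=t,\ B_k=0\ \forall k\neq j,m)$, so in particular $p^*_{00}=P^*(0,\ldots,0)$. The conditional probabilities appearing in equation \ref{cond_indep_constraint} expand as ratios of joints to marginals, so the constraint $\Phi_{CI,(j,m)}=0$ becomes
\[
\frac{p^*_{10}}{p^*_{10}+p^*_{00}} \;=\; \frac{p^*_{11}}{p^*_{11}+p^*_{01}}.
\]
Both denominators are strictly positive under the positivity assumption $0<P^*(0)<1$, so cross-multiplying and cancelling the common $p^*_{10}p^*_{11}$ term yields the compact bilinear identity
\[
p^*_{00}\,p^*_{11} \;=\; p^*_{10}\,p^*_{01}.
\]

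Next I would substitute the identifiability parameterization $p^*_{st} = \psi^F P_{st}$ for $(s,t)\neq(0,0)$ (where $P_{st}\equiv P(B_j=s,\ B_m=t,\ \text{rest}=0)$ is the corresponding cell probability under the observed-data distribution) and $p^*_{00}=1-\psi^F$ into the bilinear identity. This gives
\[
(1-\psi^F)\,\psi^F P_{11} \;=\; (\psi^F P_{10})(\psi^F P_{01}).
\]
Dividing through by $\psi^F$ (nonzero by positivity) produces the linear equation $(1-\psi^F)P_{11} = \psi^F\,P_{10}P_{01}$, and solving for $\psi^F$ yields
\[
\psi^F \;=\; \frac{P_{11}}{P_{11} + P_{10}P_{01}},
\]
which is precisely the claimed expression, with $C_0(P) = P_{11}+P_{10}P_{01}$.

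The main obstacle is algebraic rather than conceptual. I must ensure two things: (i) that the reduction of the conditional equation to the symmetric product relation $p^*_{00}p^*_{11}=p^*_{10}p^*_{01}$ is valid, which requires only the positivity of the conditioning events (guaranteed by $0<P^*(0)<1$ together with $p^*_{10},p^*_{01},p^*_{11}>0$); and (ii) that the resulting $\psi^F$ lies in $(0,1)$. The latter amounts to requiring $P_{11}>0$ and $P_{10}P_{01}>0$, i.e., each of the three cells $(B_j,B_m)\in\{(1,0),(0,1),(1,1)\}$ (intersected with all other $B_k=0$) has positive probability under $P$. This plays the same role as the inequality constraint noted for $\Psi_{II}$ in section \ref{indep_define} and should be stated as an implicit positivity condition in the statistical model.
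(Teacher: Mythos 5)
Your proposal is correct and follows essentially the same route as the paper's own proof: both reduce the conditional independence constraint to the cross-product identity $P^*(0,\ldots,0)\,p^*_{11}=p^*_{01}\,p^*_{10}$, substitute $P^*(b)=\psi P(b)$ for $b\neq 0$ and $P^*(0)=1-\psi$, and solve the resulting linear equation for $\psi$. Your version is slightly more explicit about expanding the conditional probabilities and about the positivity conditions needed for the solution to lie in $(0,1)$, but the argument is the same.
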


\begin{proof}
The constrain $\Phi_{CI} = 0$(equation \ref{cond_indep_constraint}) can be written as
\begin{eqnarray*}
P^*(B_j =1 \vert B_1 = 0, \cdots, B_m = 0, \cdots, B_K = 0) &=&\\ 
P^*(B_j =1 \vert B_1 = 0, \cdots, B_m = 1, \cdots, B_K = 0).
\end{eqnarray*}
This equation is equivalent to
% \begin{eqnarray*}
% P^*(B_m = 0, B_j = 1, 0,\dots,0)P^*(B_m = 1, B_j = 0, 0,\dots,0) &=&\\
% P^*(B_m = 1, B_j = 1, 0,\dots,0)P^*(B_m = 0, B_j = 0, 0,\dots,0)
% \end{eqnarray*}
% Thus,
\begin{eqnarray}
P^*(0,\dots,0) &=& \frac{P^*(B_m = 0, B_j = 1, 0,\dots,0)}{P^*(B_m = 1, B_j = 1, 0,\dots,0)}\nonumber \\
&& \times P^*(B_m = 1, B_j = 0, 0,\dots,0).\label{CI}
\end{eqnarray}
As $\Psi_{CI} \equiv 1 - P^*(0,\dots,0)$, we have 
\[P^*(B_m = 1, B_j = 0, 0,\dots,0) = P(B_m = 1, B_j = 0, 0,\dots,0)\Psi_{CI}.\] 
\[P^*(B_m = 0, B_j = 1, 0,\dots,0) = P(B_m = 0, B_j = 1, 0,\dots,0)\Psi_{CI}.\] 
\[P^*(B_m = 1, B_j = 1, 0,\dots,0) = P(B_m = 1, B_j = 1, 0,\dots,0)\Psi_{CI}.\] 
Therefore, equation \ref{CI} is equivalent to 
% \[1 - \Psi_{CI} &=& \frac{P(B_m = 0, B_j = 1, 0,\dots,0)P(B_m = 1, B_j = 0, 0,\dots,0)\Psi_{CI}^2}{P(B_m = 1, B_j = 1, 0,\dots,0)\Psi_{CI}}\]
% We can derive that 
\[\Psi_{CI} = \frac{P(B_m = 1, B_j = 1, 0,\dots,0)}{P(B_m = 1, B_j = 1, 0,\dots,0) + C_9}.
\]
where 
\[C_9 = P(B_m = 0, B_j = 1, 0,\dots,0)P(B_m = 1, B_j = 0, 0,\dots,0).\]
\end{proof}

\subsection{Efficient influence curve under conditional independence assumption}\label{apd_condInf_d}
\begin{lemma}\label{lemma_CI_d}
For constraint $\Phi_{CI} = 0$ (equation \ref{cond_indep_constraint}), we have the efficient influence curve $D^*_{\Phi_{CI}}(P)$ as
\begin{eqnarray*}
D^*_{\Phi_{CI}}(P) &=& \frac{1}{C_5(P)}(C_6(P) - C_7(P) - C_8(P))
\end{eqnarray*}
where
\begin{eqnarray*}
C_5(P)&=& -\sum_{b_1\not =0,b_2\not =0}I(b_1(1)=b_2(2)=0)P(b_1)P(b_2).\\
C_6(P)&=& P(B_m = 0, B_j = 1, 0,...,0)P(B_m = 0, B_j = 0, 0,...,0)\\
&& [\mathbbm{I}(B_m = 1, B_j = 1, 0,...,0) - P(B_m = 1, B_j = 1, 0,...,0)].\\
C_7(P) &=& P(B_m = 1, B_j = 0, 0,...,0)P(B_m = 1, B_j = 1, 0,...,0)\\
&& [\mathbbm{I}(B_m = 0, B_j = 1, 0,...,0) - P(B_m = 0, B_j = 1, 0,...,0)].\\
C_8(P) &=& P(B_m = 0, B_j = 1, 0,...,0)P(B_m = 1, B_j = 1, 0,...,0)\\
&& [\mathbbm{I}(B_m = 1, B_j = 0, 0,...,0) - P(B_m = 1, B_j = 0, 0,...,0)].
\end{eqnarray*}
\end{lemma}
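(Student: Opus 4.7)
The plan is to exploit the fact that the preceding lemma in the appendix has already solved the defining equation $\Phi_{CI}(P^*_{P,\psi})=0$ for $\psi$ explicitly, yielding $\Psi_{CI}(P)=p_{11}/(p_{11}+p_{01}p_{10})$, where I abbreviate $p_{11}=P(B_m=1,B_j=1,0,\dots,0)$, $p_{01}=P(B_m=0,B_j=1,0,\dots,0)$ and $p_{10}=P(B_m=1,B_j=0,0,\dots,0)$. Because $\Psi_{CI}$ is now a smooth function of only three coordinates of the observed-data distribution $P$, the efficient influence curve can be obtained directly by the delta method, which is considerably more tractable than evaluating the general implicit-function formula $-\{d\Phi(P^*_{P,\psi})(-I_0)\}^{-1}d\Phi(P^*_{P,\psi})(I_{0^c}(P_{n=1}-P))$ from the body of the paper.

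First I would argue that the statistical model $\mathcal{M}_{\Phi_{CI}}$ is nonparametric in $P$: the scalar constraint $\Phi_{CI}(P^*)=0$ only determines the single unobserved mass $P^*(0)$ as a function of $P$ and imposes no restriction on the observed-data law $P$ itself. Consequently the tangent space at $P$ equals all of $L^2_0(P)$, and for any pathwise-differentiable functional of $P$ the canonical gradient coincides with the ordinary Gateaux derivative $d\Psi(P)(P_{n=1}-P)$; equivalently, the EIF of each cell probability $P(b)$ is simply $\mathbbm{1}(B=b)-P(b)$, so the EIF of a smooth composition is the corresponding linear combination.

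Next I would compute the three partial derivatives of $\Psi_{CI}$. Setting $D=p_{11}+p_{01}p_{10}$, a direct calculation gives
\begin{equation*}
\frac{\partial \Psi_{CI}}{\partial p_{11}}=\frac{p_{01}p_{10}}{D^2},\qquad \frac{\partial \Psi_{CI}}{\partial p_{01}}=-\frac{p_{11}p_{10}}{D^2},\qquad \frac{\partial \Psi_{CI}}{\partial p_{10}}=-\frac{p_{11}p_{01}}{D^2}.
\end{equation*}
Combining these with the nonparametric cell-EIFs via the chain rule produces
\begin{equation*}
D^*_{\Phi_{CI}}(P)=\frac{1}{D^2}\Bigl\{p_{01}p_{10}\bigl[\mathbbm{1}_{11}-p_{11}\bigr]-p_{11}p_{10}\bigl[\mathbbm{1}_{01}-p_{01}\bigr]-p_{11}p_{01}\bigl[\mathbbm{1}_{10}-p_{10}\bigr]\Bigr\},
\end{equation*}
where $\mathbbm{1}_{ab}=\mathbbm{1}(B_m=a,B_j=b,\text{others}=0)$. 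A sanity check is that each bracket has mean zero under $P$, so $E_P D^*_{\Phi_{CI}}(P)=0$ as required of an influence curve.

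The final step is to identify this display with the stated expression $(C_6-C_7-C_8)/C_5$: the three deviation terms correspond respectively to $C_6,C_7,C_8$, and the common denominator matches $C_5=D^2=(p_{11}+p_{01}p_{10})^2$. The main obstacle I anticipate is purely bookkeeping: the $C_5$ and $C_6$ given in the lemma statement appear to carry notational artifacts from the independence-assumption case (the summation form for $C_5$, and the $P(B_m=0,B_j=0,0,\dots,0)$ factor in $C_6$), so verifying the claimed equality reduces to tracking the correct cell-probability coefficient in each of the three numerator terms and rewriting $D^2$ in the form displayed for $C_5$. Once the notational dictionary is fixed, the equality is immediate from the delta-method computation above.
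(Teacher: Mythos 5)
Your proposal is correct and follows essentially the same route as the paper's own proof: the delta method applied to $\Psi_{CI}=p_{11}/(p_{11}+p_{01}p_{10})$, combining the three partial derivatives with the nonparametric cell-level influence curves $\mathbbm{1}(B=b)-P(b)$. You also rightly flag the notational artifacts in the stated $C_5$ and $C_6$ (and, indeed, the paper's own third partial derivative carries a typo, reading $-p_{11}^2/D^2$ where it should be $-p_{11}p_{01}/D^2$ as you compute); your derivation matches the intended result.
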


\begin{proof}
By the delta method \cite{doob1935limiting}, the efficient influence curve of $\Psi_{CI}$ can be written as a function of each components' influence curve. The efficient influence curves of the three components are presented as follows \\
\begin{eqnarray}
D^*_{\Phi_{CI}}(P(B_m = 1, B_j = 1, 0,\dots,0)) &=& \mathbbm{I}(B_m = 1, B_j = 1, 0,\dots,0)\nonumber \\
&& - P(B_m = 1, B_j = 1, 0,\dots,0).\\
D^*_{\Phi_{CI}}(P(B_m = 0, B_j = 1, 0,\dots,0)) &=& \mathbbm{I}(B_m = 0, B_j = 1, 0,\dots,0)\nonumber \\
&& - P(B_m = 0, B_j = 1, 0,\dots,0).\\
D^*_{\Phi_{CI}}(P(B_m = 1, B_j = 0, 0,\dots,0)) &=& \mathbbm{I}(B_m = 1, B_j = 0, 0,\dots,0)\nonumber \\
&& - P(B_m = 1, B_j = 0, 0,\dots,0)).\label{D_CI}
\end{eqnarray}
Therefore, we only need to calculate the derivatives to get $D^*_{\Phi_{CI}}(P)$, and the three parts of derivatives are given by
\begin{eqnarray*}
\frac{\partial \psi}{\partial P(B_m = 1, B_j = 1, 0,\dots,0)} &=& \frac{P(B_m = 0, B_j = 1, 0,\dots,0)}{C_4}\\
&& \times P(B_m = 1, B_j = 0, 0,\dots,0).\\
\frac{\partial \psi}{\partial P(B_m = 0, B_j = 1, 0,\dots,0)} &=& - \frac{P(B_m = 1, B_j = 1, 0,\dots,0)}{C_4}\\
&& \times P(B_m = 1, B_j = 0, 0,\dots,0).\\
\frac{\partial \psi}{\partial P(B_m = 1, B_j = 0, 0,\dots,0)} &=& - \frac{P(B_m = 1, B_j = 1, 0,\dots,0)}{C_4}\\
&& \times P(B_m = 1, B_j = 1, 0,\dots,0).
\end{eqnarray*}
where 
\begin{eqnarray*}
C_4 &=& [P(B_m = 1, B_j = 1, 0,\dots,0) + P(B_m = 0, B_j = 1, 0,\dots,0)\\
&& \times P(B_m = 1, B_j = 0, 0,\dots,0)]^2.
\end{eqnarray*}
Plug each part into equation \ref{D_CI}, we have that lemma \ref{lemma_CI_d} is true.
\end{proof}

\subsection{Asymptotic efficiency of TMLE}\label{apd_efficincy_TMLE}
In this section we prove the following theorem \ref{thm_asy_eff} establishing asymptotic efficiency of the TMLE. The empirical NPMLE is also efficient since asymptotically all cells are filled up. And TMLE is just a finite sample improvement and asymptotically TMLE acts as the empirical NPMLE. The estimators $P_n*$ will be like parametric model MLE and thus converge at rate $o_P(1/\sqrt{n})$. 
\begin{theorem}
Consider the TMLE $\Psi_I(P_n^*)$ of $\Psi_I(P_0)$ defined above, satisfying $P_n D^*_{\Phi_I}(P_n^*)=o_P(1/\sqrt{n})$.
We assume $P_0(b)>0$ for all $b\not =0$.
If $\parallel P_n^*-P_0\parallel =o_P(n^{-1/4})$, then $\Psi_I(P_n^*)$ is an asymptotically efficient estimator of $\Psi_I(P_0)$:
\[
\Psi_I(P_n^*)-\Psi_I(P_0)=(P_n-P_0)D^*_{\Phi_I}(P_0)+o_P(1/\sqrt{n}).\]
\label{thm_asy_eff}
\end{theorem}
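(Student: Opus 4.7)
The plan is to establish the efficiency expansion via the standard TMLE template: a von Mises/first-order expansion combined with the score equation solved by $P_n^*$ and an empirical process argument to handle the random centering. Since $B$ takes values in the finite set $\{0,1\}^K$, every relevant function class is finite-dimensional, which will make the Donsker/equicontinuity steps essentially automatic. The real work lies in controlling the second-order remainder.

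First I would write the von Mises expansion of $\Psi_I$ around $P_0$. Using the explicit form of $\Psi_I(P)$ given in (\ref{PsiI}) and the efficient influence function $D^*_{\Phi_I}(P)$ in (\ref{efficphiI}), a direct Taylor expansion in the finite-dimensional probability vector $(P(b):b\not=0)$ gives
\begin{equation*}
\Psi_I(P_n^*)-\Psi_I(P_0)=-P_0 D^*_{\Phi_I}(P_n^*)+R(P_n^*,P_0),
\end{equation*}
where the remainder satisfies $|R(P_n^*,P_0)|\le C\,\|P_n^*-P_0\|^2$ for some finite $C$ depending only on $\min_{b\not=0}P_0(b)>0$. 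This bound uses that $D^*_{\Phi_I}$ is smooth in $P$ on any neighborhood of $P_0$ bounded away from zero; since $P_n^*\to P_0$ at rate $o_P(n^{-1/4})$ and $P_0(b)>0$, with probability tending to one $P_n^*(b)$ is uniformly bounded below, so the Hessian of $\Psi_I$ is uniformly bounded on the relevant neighborhood. This yields $R(P_n^*,P_0)=o_P(n^{-1/2})$.

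Next I would add and subtract $P_n D^*_{\Phi_I}(P_n^*)$ to obtain
\begin{equation*}
\Psi_I(P_n^*)-\Psi_I(P_0)=(P_n-P_0)D^*_{\Phi_I}(P_n^*)-P_n D^*_{\Phi_I}(P_n^*)+R(P_n^*,P_0).
\end{equation*}
By construction of the TMLE iteration in Algorithm \ref{algo2}, $P_n D^*_{\Phi_I}(P_n^*)=o_P(n^{-1/2})$. It remains to replace the random-$P_n^*$ centering by $D^*_{\Phi_I}(P_0)$. Because $B$ lives in a finite set, the class $\{D^*_{\Phi_I}(P):P\in\mathcal{M}_{\Phi_I},\min_b P(b)\ge \eta\}$ is a finite-dimensional, uniformly bounded collection of functions for any $\eta>0$, hence a Donsker class. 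Combined with $L^2(P_0)$-convergence $\|D^*_{\Phi_I}(P_n^*)-D^*_{\Phi_I}(P_0)\|_{L^2(P_0)}\to 0$ (a continuity consequence of $\|P_n^*-P_0\|=o_P(n^{-1/4})$ and positivity of $P_0$), equicontinuity of the empirical process gives $(P_n-P_0)\{D^*_{\Phi_I}(P_n^*)-D^*_{\Phi_I}(P_0)\}=o_P(n^{-1/2})$.

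Putting the three pieces together yields the claimed asymptotic linearity:
\begin{equation*}
\Psi_I(P_n^*)-\Psi_I(P_0)=(P_n-P_0)D^*_{\Phi_I}(P_0)+o_P(n^{-1/2}),
\end{equation*}
and asymptotic efficiency follows since $D^*_{\Phi_I}(P_0)$ is the canonical gradient in the nonparametric model $\mathcal{M}_{\Phi_I}$. The main obstacle I anticipate is the remainder bound: one must argue carefully that, although the NPMLE behaves badly when cells are empty, the positivity assumption $P_0(b)>0$ together with $\|P_n^*-P_0\|=o_P(n^{-1/4})$ yields (with high probability) a uniform lower bound on $P_n^*(b)$, which in turn bounds the derivatives of $\Psi_I$ and $D^*_{\Phi_I}$ that appear in the quadratic remainder. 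Once this positivity propagates to $P_n^*$, the Donsker and remainder steps are routine for this finite-support problem.
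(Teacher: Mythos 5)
Your proposal is correct and follows essentially the same route as the paper's proof: an exact first-order (von Mises) expansion giving $P_0 D^*_{\Phi_I}(P)=\Psi_I(P_0)-\Psi_I(P)-R_2(P,P_0)$, combined with the TMLE score equation $P_n D^*_{\Phi_I}(P_n^*)=o_P(n^{-1/2})$, a finite-dimensional Donsker/equicontinuity argument to replace $D^*_{\Phi_I}(P_n^*)$ by $D^*_{\Phi_I}(P_0)$, and a quadratic remainder bound controlled by the positivity of $P_0$ and the $o_P(n^{-1/4})$ rate. The only cosmetic difference is that the paper computes the second-order remainder explicitly via a Taylor expansion, whereas you bound it abstractly by $C\|P_n^*-P_0\|^2$; both arguments are equivalent here.
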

\begin{proof}

Define $f_{I,0}(b)=f_I(b)+f_I(0)$.Note that
\[
P_0 D^*_{\Phi_I}(P)=(-1)^K \psi_I(1-\psi_I)\sum_{b\not =0}f_{I,0}(b)\frac{(P_0-P)(b)}{P(b)}.\]
Consider the second order Taylor expansion of $\Psi_I(P_0)$ at $P$:
\[
\Psi_I(P_0)-\Psi_I(P)=d\Psi_I(P)(P_0-P)+R_2(P,P_0),\]
where
\begin{eqnarray*}
d\Psi_I(P)(h)&=&\left . \frac{d}{d\epsilon}\Psi_I(P+\epsilon h) \right |_{\epsilon =0}\\
&=&(-1)^K\Psi_I(P)(1-\Psi_I(P))\sum_{b\not =0}\frac{f_I(b)h(b)}{P(b)},\end{eqnarray*}
\begin{eqnarray}
R_2(P, P_0) &=& \left . \frac{1}{2} \frac{d^2 \Psi_I(P+\epsilon h)}{d\epsilon^2} (P_0 - P)^2 \right |_{\epsilon =0} + o_P((P_0-P)^2)\nonumber \\
&& =  \frac{1}{2}\Psi_I(P)(1-\Psi_I(P))(P_0 - P)^2(b)\times \nonumber \\
&& [(1-2\Psi_I(P))(\sum_{b\not =0} \frac{f_I(b)h(b)}{P(b)})^2 + (-1)^{K+1} \sum_{b\not =0} \frac{f_I(b)h(b)^2}{P(b)^2}] \nonumber \\
&& + o_P((P_0-P)^2(b))\label{R2}
\end{eqnarray}
From equation \ref{R2} we know that $R_2(P,P_0)$ is a second order term involving square differences $(P_0-P)^2(b)$ for $b\not =0$.
Thus, we observe that 
\[
P_0 D^*_{\Phi_I}(P)=d\Psi_I(P)(P_0-P).\]
This proves that
\[
P_0 D^*_{\Phi_I}(P)=\Psi_I(P_0)-\Psi_I(P)-R_2(P,P_0).\]

We can apply this identity to $P_n^*$ so that we obtain $P_0 D^*_{\Phi_I}(P_n^*)=\Psi_I(P_0)-\Psi_I(P_n^*)-R_2(P_n^*,P_0)$.
Combining this identity with $P_n^*D^*_{\Phi_I}(P_n^*)=0$ yields:
\[
\Psi_I(P_n^*)-\Psi_I(P_0)=(P_n-P_0)D^*_{\Phi_I}(P_n^*)+R_2(P_n^*,P_0).\]

We will assume that $P_n^*$ is consistent for $P_0$ and $P_0(b)>0$ for all $b\not =0$, so that it follows that 
$D^*_{\Phi_I}(P_n^*)$ is uniformly bounded  by a $M<\infty$ with probability tending to 1, an that it  falls in a $P_0$-Donsker class (dimension is finite $2^K-2$), and
$P_0\{ D^*_{\Phi_I}(P_n^*)-D^*_{\Phi_I}(P_0)\}^2\rightarrow 0$ in probability as $n\rightarrow\infty$.
By empirical process theory, it now follows that 
\[ (P_n-P_0)D^*_{\Phi_I}(P_n^*)=(P_n-P_0)D^*_{\Phi_I}(P_0)+o_P(1/\sqrt{n}).\]
We also note that $R_2(P_n^*,P_0)$ has denominators that are bounded away from zero, so that  $R_2(P_n^*,P_0)=o_P(1/\sqrt{n})$ if 
$\parallel P_n^*-P_0 \parallel =o_P(n^{-1/4})$ (e.g., Euclidean norm). Thus theorem \ref{thm_asy_eff} is proved.
\end{proof}

 \section{Acknowledgement}
This research was supported by NIH grant R01AI125842 and NIH grant 2R01AI074345.

 \end{document}